\title{Necessary conditions for the existence of Morita Contexts in the bicategory of Landau-Ginzburg Models}
\author{Yves Baudelaire Fomatati\\
\small Department of Mathematics and Statistics, University of Ottawa,\\ \small Ottawa, Ontario, Canada K1N 6N5.\\ \small yfomatat@uottawa.ca.}
\date{}
\theoremstyle{plain}
\newtheorem{remark}{Remark}[section]
\theoremstyle{plain}
\newtheorem{lemma}{Lemma}[section]
\theoremstyle{plain}
\newtheorem{proposition}{Proposition}[section]
\theoremstyle{plain}
\newtheorem{theorem}{Theorem}[section]
\theoremstyle{plain}
\newtheorem{definition}{Definition}[section]
\theoremstyle{plain}
\newtheorem{corollary}{Corollary}[section]
\theoremstyle{plain}
\newtheorem{fact}{Fact}[section]
\theoremstyle{plain}
\newtheorem{notation}{Notations}[section]
\newtheorem{example}{Example}[section]
\theoremstyle{plain}
\frenchspacing \linespread{1}
\begin{document}
\maketitle
\begin{quote}
  \textbf{Abstract}
\end{quote}

We use a matrix approach to study the concept of Morita context in the bicategory $\mathcal{LG}_{K}$ of Landau-Ginzburg models on a particular class of objects.
In fact, we first use properties of matrix factorizations to state and prove two necessary conditions to obtain a Morita context between two objects of $\mathcal{LG}_{K}$.
Next, we use a celebrated result (due to Schur) on determinants of block matrices to show that these necessary conditions are not sufficient. Finally, we state a trivial sufficient condition.
\\\\
\textbf{Keywords.} Matrix factorizations, tensor product, Morita equivalence.\\
\textbf{Mathematics Subject Classification (2020).} 16D90, 15A23, 15A69, 18N10.
\\\\
In the sequel, $K$ is a commutative ring with unity and $R$ will denote the power series ring $K[[x_{1},\cdots,x_{n}]]$ or its subring of polynomials $K[x_{1},\cdots,x_{n}]$. It would always be clear which ring we are referring to.
\section{Introduction}

A \textit{Morita context} also called pre-equivalence data \cite{bass1967lectures}, is a generalization of \textit{Morita equivalence} between categories of modules.
Two rings $T$ and $S$ are called Morita equivalent if the categories of left $T-$Modules ($T-$Mod) and of left $S-$Modules $(S-$Mod) are equivalent.
The prototype of Morita equivalent rings is provided by a ring $T$ and the ring of $n\times n$ matrices over $T$ (for details, see corollary 22.6 of \cite{anderson2012rings}). \\
It is evident that if two rings are isomorphic, they are Morita equivalent. However, the converse is not true in general. There exist rings that are not isomorphic, yet are Morita equivalent (cf. p. 470 of \cite{lam1999graduate}). In fact, it suffices to take a ring $T$ and the ring of $n\times n$ matrices over $T$.  However, there is a partial converse which holds. Indeed,
  if two rings are Morita equivalent, then their centers are isomorphic. In particular, if the rings are commutative, then they are isomorphic (\cite{lam1999graduate}, p.494).
Because of this result, Morita equivalence is interesting solely in the situation of noncommutative rings. For more details on Morita equivalence, see \cite{anderson2012rings}. This notion can be generalized using the notion of Morita contexts.\\
Morita contexts were first introduced in the bicategory of unitary rings
and bimodules as $6$-tuples $(A,B,M,N,\phi,\psi)$; where $A$ and $B$ are rings, $M$ is an
A-B-bimodule, $N$ is a B-A-bimodule and $\phi: M\otimes N \rightarrow A$ and $\psi: N\otimes M \rightarrow B$ are homomorphisms satisfying $\phi \otimes M= M\otimes \psi$ and $N\otimes \phi = \psi \otimes N$. For a characterization of Morita context in this bicategory, see theorem 5.4 of \cite{pecsi2012morita}.
Bass (cf. chap. 2, section 4.4 of \cite{bass1962morita}) proved that the Morita context $(A,B,M,N,\phi,\psi)$ is a Morita equivalence if and only if $M$ is both projective and a generator
in the category of $A-$modules. Recall (cf. \cite{lam1999graduate}) that if $A$ is a ring, an $A$-module $P$ is projective if for every surjective $A$-linear map $f:M \rightarrow N$ and  every $A$-linear map $g:P\rightarrow N$ there is a unique $A$-linear map $h:P\rightarrow M$ such that $g=fh$.\\
 Morita contexts were recently studied in many bicategories \cite{pecsi2012morita}. But they have not yet been studied in the bicategory $\mathcal{LG}_{K}$ of Landau-Ginzburg models (section 2.2 of \cite{camacho2015matrix}) over a commutative ring $K$ whose intricate construction (\cite{carqueville2016adjunctions}) is reminiscent of, but more complex than that of the bicategory of associative algebras and bimodules. In this paper, we study the concept of Morita context in the bicategory of Landau-Ginzburg models. A Landau-Ginzburg model is a model in solid state physics for superconductivity. $\mathcal{LG}_{K}$ possesses adjoints (also called duals, cf. \cite{carqueville2016adjunctions}) and this helps in explaining a certain duality that exists in the setting of Landau-Ginzburg models in terms of some specified relations (cf. page 1 of \cite{carqueville2016adjunctions}). The objects of $\mathcal{LG}_{K}$ are \textit{potentials} which are polynomials satisfying some conditions (definition 2.4 p.8 of \cite{carqueville2016adjunctions}).
But unlike \cite{carqueville2016adjunctions}, we do not restrict ourselves to \textit{potentials}.
It turns out that the authors of \cite{carqueville2016adjunctions} used potentials to suit their purposes because even if we take the objects of $\mathcal{LG}_{K}$ to be polynomials rather than potentials and then apply the construction of $\mathcal{LG}_{K}$ given in \cite{carqueville2016adjunctions}, we obtain virtually the same bicategory except that we now have more objects. Thus in this paper, the objects of $\mathcal{LG}_{K}$ are simply polynomials.
\\
There are many reasons for studying the notion of Morita context.
The first reason is that it generalizes the very important notion of Morita equivalence.
Another reason is that it is used to prove some celebrated results. For example, the Morita context which has been introduced in \cite{morita1958duality} was used since to prove Wedderburn theorem on the structure of simple rings \cite{bass1962morita}. Morita contexts were also used in \cite{amitsur1971rings} to obtain various results: Goldie's theorem (\cite{goldie1958structure}, \cite{goldie1960semi}) on the ring of quotients of semi-prime rings and as a specialization, Wedderburn's structure theorems of semi-simple Artinian rings were obtained.
 Other applications, though sometimes not stated in an explicit form, can be found in various places (e.g. \cite{jacobson1964structure}, p.75]).\\
 In this paper, we use properties of matrix factorizations to give necessary conditions to obtain a Morita context between two objects of the bicategory $\mathcal{LG}_{K}$. Thus, our first main result is the following theorem.\\\\
\textbf{Theorem A.} \\
Let
\begin{itemize}
\item $(R,f)$ and $(S,g)$ be two objects of $\mathcal{LG}_{K}$.
\item $X\in \mathcal{LG}_{K}((R,f),(S,g))= hmf(R\otimes_{K} S, g-f)$ i.e., $X:(R,f) \rightarrow (S,g)$ is a finite rank matrix factorization of $g-f$.
\item $Y\in \mathcal{LG}_{K}((S,g),(R,f))= hmf(S\otimes_{K} R, f-g)$ i.e., $Y:(S,g) \rightarrow (R,f)$ is a finite rank matrix factorization of $f-g$. \\
    such that $X\otimes Y$ and $Y\otimes X$ are finite rank matrix factorizations.
\item $\Delta_{f}\in \mathcal{LG}_{K}((R,f),(R,f))= hmf(R\otimes_{K} R, f\otimes id - id\otimes f)$ i.e., $\Delta_{f}:(R,f) \rightarrow (R,f)$ is a finite rank matrix factorization of $f\otimes id-id\otimes f$.

\item $\Delta_{g}\in \mathcal{LG}_{K}((S,g),(S,g))= hmf(R\otimes_{K} S, g\otimes id - id\otimes g)$ i.e., $\Delta_{g}:(S,g) \rightarrow (S,g)$ is a finite rank matrix factorization of $g\otimes id-id\otimes g$.
\item $\eta: X\otimes_{R} Y \rightarrow 1_{(R,f)}=\Delta_{f}$ and let $(P,Q)$ and $(R,T)$ be pairs of matrices representing respectively the finite rank matrix factorizations $X\otimes Y$ and $\Delta_{f}$.
\item $\rho: Y\otimes_{S} X \rightarrow 1_{(S,g)}=\Delta_{g}$ and let $(P',Q')$ and $(R',T')$ be pairs of matrices representing respectively the finite rank matrix factorizations $Y\otimes X$ and $\Delta_{g}$.
 \end{itemize}
 Then:
 A necessary condition for $\Gamma = (X,Y,\eta,\rho)$ to be a \textit{Morita Context} is $$\begin{cases}
    \eta^{1}=0,\;and\; \eta^{0}Q=0,\\
    \rho^{1}=0,\;and\; \rho^{0}Q'=0.
   \end{cases} $$
   where for ease of notation, we wrote $\rho^{i}$ and $\eta^{i}$ respectively for the matrices of $\rho^{i}$ and $\eta^{i}$, $i=0,1$.
\\

Next, thanks to a celebrated result (due to Schur \cite{puntanen2005historical}, \cite{silvester2000determinants}) on determinants of block matrices, we observe that if $X$ and $X'$ are respectively two matrix factorizations of two arbitrary polynomials, then the determinants of the four matrices appearing in the Yoshino tensor products $X\widehat{\otimes} X'=(P,Q)$ and $X'\widehat{\otimes }X=(P',Q')$ are all equal.\\
Moreover, when we translate this in $\mathcal{LG}_{K}$ where a $1-$morphism is a matrix factorization of the difference of two polynomials, we find out that those four determinants are all equal to zero. So our second main result is stated as follows:\\\\
 \textbf{Theorem B.}  \\
 Let $(R,f)$ and $(S,g)$ be two objects in $\mathcal{LG}_{K}$ and let $$X:(R,f) \rightarrow (S,g)\,\, and\,\, X': (S,g)\rightarrow (R,f)$$ be 1-morphisms in $\mathcal{LG}_{K}$. If $X\otimes X'=(P,Q)$ and $X'\otimes X=(P',Q')$,
 then $$det(P)=det(Q)=det(P')=det(Q')=0$$
\\

 Thanks to this result we conclude that the necessary conditions earlier stated are not sufficient.
%
\\
 This paper is organized as follows: In the next section, we review the notion of matrix factorization. In section 3, we recall properties of matrix factorizations. Section 4 is a recall of the definition of the bicategory of Landau-Ginzburg models. The notion of Morita contexts in $\mathcal{LG}_{K}$ is discussed in section 5. Finally, we discuss further problems in the last section.
\section{Matrix Factorizations}
In this section, we first recall the definition of a matrix factorization and describe the category of matrix factorizations of a power series $f$. Next, the definition of Yoshino's tensor product is recalled.

In 1980, Eisenbud came up with an approach of factoring both reducible and irreducible polynomials in $R$ using matrices.
For instance, the polynomial
$f=x^{2}+y^{2}$ is irreducible over the real numbers but can be factorized as follows:
$$\begin{bmatrix}
    x  &  -y      \\
    y  &  x
\end{bmatrix}
\begin{bmatrix}
    x  &  y     \\
    -y  & x
\end{bmatrix}
= (x^{2} + y^{2})\begin{bmatrix}
    1  &  0      \\
    0  &  1
\end{bmatrix}
=fI_{2} $$
We say that
$
(\begin{bmatrix}
    x  &  -y      \\
    y  &  x
\end{bmatrix},
\begin{bmatrix}
    x  &  y      \\
    -y  & x
\end{bmatrix})
$
 is a $2 \times 2$ matrix factorization of $f$.

\begin{definition}\cite{yoshino1998tensor}, \cite{crisler2016matrix}  \label{defn matrix facto of polyn}   \\
An $n\times n$ \textbf{matrix factorization} of a power series $f\in \;R$ is a pair of $n$ $\times$ $n$ matrices $(P,Q)$ such that
$PQ=fI_{n}$, where $I_{n}$ is the $n \times n$ identity matrix and the coefficients of $P$ and of $Q$ are taken from $R$.
\end{definition}
When $n=1$, we get a $1$ $\times$ $1$ matrix factorization of $f$, i.e., $f=[g][h]$ which is simply a factorization of $f$ in the classical sense. But in case $f$ is not reducible, this is not interesting, that's why we will mostly consider $n > 1$.\\
  The original definition of a matrix factorization was given by Eisenbud \cite{eisenbud1980homological} as follows: a matrix factorization of an element $x$ in a ring $A$ (with unity) is an ordered pair of maps of free $A-$modules $\phi: F\rightarrow G$ and $\psi: G \rightarrow F$ s.t., $\phi\psi=x\cdot 1_{G}$ and $\psi\phi=x\cdot 1_{F}$.
  Though this definition is valid for any arbitrary ring (with unity), in order to effectively study matrix factorizations, it is important to restrict oneself to specific rings. Working with specific rings makes it possible to easily give examples and it also allows one to carry out computations in a well-defined framework. Yoshino \cite{yoshino1998tensor} restricted himself to matrix factorizations of power series. In this section, we will restrict ourselves to matrix factorizations of a  polynomial.

%
%
%
%
%
%
\begin{example} \label{third example matrix facto}
  Let $h=xy+xz^{2}+yz^{2}$.\\
We give a $2 \times 2$ matrix factorization of $h$:

$$\begin{bmatrix}
    z^{2}  &  y      \\
    x  &  -x-y
\end{bmatrix}
\begin{bmatrix}
    x+y  &  y      \\
    x  & -z^{2}
\end{bmatrix}
= (xy+xz^{2}+yz^{2})\begin{bmatrix}
    1  &  0      \\
    0  &  1
\end{bmatrix}
=hI_{2} $$
Thus;

\((\begin{bmatrix}
   z^{2}  &  y      \\
    x  &  -x-y
\end{bmatrix},
\begin{bmatrix}
     x+y  &  y      \\
    x  & -z^{2}
\end{bmatrix} )\)

 is a $2 \times 2$ matrix factorization of $h$.
\end{example}

We now propose a simple straightforward algorithm to obtain an $n\times n$ matrix factorization from one that is $m\times m$, where $m<n$.\\

\textbf{Simple straightforward algorithm}:
Let $(P,Q)$ be an $m\times m$ matrix factorizations of a power series $f$. Suppose we want an $n\times n$ matrix factorization of $f$, where $n>m$. \\
Let $(i,j)$ stand for the entry in the $i^{th}$ row and $j^{th}$ column.

\begin{itemize}
\item Turn $P$ and $Q$ into $n\times n$ matrices by filling them with zeroes everywhere except at entries $(i,j)$ where $1\leq i\leq m$ and $1\leq j\leq m$.\\
    Then for all entries $(k,k)$ with $k>m$, Either:
\begin{enumerate}
\item In $P$, replace the diagonal elements (which are zeroes) with $f$
\\ And
\item In $Q$, replace the diagonal elements (which are zeroes) with 1
\end{enumerate}
  Or:\\
  Interchange the roles of $P$ and $Q$ in steps $1.$ and $2.$ above.

\end{itemize}


It is evident that this simple algorithm works not only for polynomials but also for any element in a unital ring.\\

The standard algorithm to factor polynomials using matrices is found in \cite{crisler2016matrix} and for an improved version see \cite{fomatati2019multiplicative} and  \cite{fomatati2021tensor}.

\subsection{The category of matrix factorizations of $f\in R$} \label{subsec category of matrix facto of f}
The category of matrix factorizations of a power series $f\in R=K[[x]]:=K[[x_{1},\cdots,x_{n}]]$ denoted by $MF(R,f)$ or $MF_{R}(f)$, (or even $MF(f)$ when there is no risk of confusion) is defined \cite{yoshino1998tensor} as follows:\\
$\bullet$ The objects are the matrix factorizations of $f$.\\
$\bullet$ Given two matrix factorizations of $f$; $(\phi_{1},\psi_{1})$ and $(\phi_{2},\psi_{2})$ respectively of sizes $n_{1}$ and $n_{2}$, a morphism from $(\phi_{1},\psi_{1})$ to $(\phi_{2},\psi_{2})$ is a pair of matrices $(\alpha,\beta)$ each of size $n_{2}\times n_{1}$ which makes the following diagram commute:
$$\xymatrix@ R=0.6in @ C=.75in{K[[x]]^{n_{1}} \ar[r]^{\psi_{1}} \ar[d]_{\alpha} &
K[[x]]^{n_{1}} \ar[d]^{\beta} \ar[r]^{\phi_{1}} & K[[x]]^{n_{1}}\ar[d]^{\alpha \;\;\;\;\;\;\;\;\;\;(\bigstar)}\\
K[[x]]^{n_{2}} \ar[r]^{\psi_{2}} & K[[x]]^{n_{2}}\ar[r]^{\phi_{2}} & K[[x]]^{n_{2}}}$$
That is,
$$\begin{cases}
 \alpha\phi_{1}=\phi_{2}\beta  \\
 \psi_{2}\alpha= \beta\psi_{1}
\end{cases}$$
For a detailed definition of this category, see \cite{fomatati2019multiplicative}.\\

\subsection{Yoshino's Tensor Product of Matrix Factorizations and its variants}\label{Yoshino tensor pdt}
In this subsection, we recall the definition of the tensor product of matrix factorizations denoted $\widehat{\otimes}$, constructed by Yoshino \cite{yoshino1998tensor} using matrices. This will be useful when we will be describing the notion of \textit{Morita Context} in $\mathcal{LG}_{K}$ (cf. section \ref{morita context in LG}).
In the sequel, except otherwise stated
$S=K[[y_{1},\cdots,y_{r}]]=K[[y]]$, where $K$ is a commutative ring with unity and $r\geq 1$.
\begin{definition} \label{defn Yoshino tensor prodt} \cite{yoshino1998tensor}
Let $X=(\phi,\psi)$ be an $n\times n$ matrix factorization of a power series $f\in R$  and $X'=(\phi',\psi')$ an $m\times m$ matrix factorization of $g\in S$. These matrices can be considered as matrices over $L=K[[x,y]]$ and the \textbf{tensor product} $X\widehat{\otimes} X'$ is given by\\
(\(
\begin{bmatrix}
    \phi\otimes 1_{m}  &  1_{n}\otimes \phi'      \\
   -1_{n}\otimes \psi'  &  \psi\otimes 1_{m}
\end{bmatrix}
,
\begin{bmatrix}
    \psi\otimes 1_{m}  &  -1_{n}\otimes \phi'      \\
    1_{n}\otimes \psi'  &  \phi\otimes 1_{m}
\end{bmatrix}
\))\\
where each component is an endomorphism on $L^{n}\otimes L^{m}$.
\end{definition}
 It is easy to see \cite{yoshino1998tensor} that
 $X\widehat{\otimes} X'$ is a matrix factorization of $f+g$ of size $2nm$.\\
In the following example, we consider matrix factorizations in one variable.
\begin{example}
Let $X=(x^{2},x^{2})$ and $X'=(y^{2},y^{4})$ be $1\times 1$ matrix factorizations of $f=x^{4}$ and $g=y^{6}$ respectively. Then
\\
          \(X\widehat{\otimes} X'=(
                 \begin{bmatrix}
                  x^{2}\otimes 1 & 1\otimes y^{2}\\
                  -1\otimes y^{4} & x^{2}\otimes 1
                  \end{bmatrix}
                  ,
                  \begin{bmatrix}
                  x^{2}\otimes 1 & -1\otimes y^{2}\\
                  1\otimes y^{4} & x^{2}\otimes 1
                  \end{bmatrix}
                  )=(\begin{bmatrix}
                  x^{2} &  y^{2}\\
                  - y^{4} & x^{2}
                  \end{bmatrix}
                  ,
                  \begin{bmatrix}
                  x^{2} & -y^{2}\\
                   y^{4} & x^{2}
                  \end{bmatrix})\)
\\
And \\\\
            \(\begin{bmatrix}
                  x^{2} &  y^{2}\\
                  - y^{4} & x^{2}
                  \end{bmatrix}
                  \begin{bmatrix}
                  x^{2} & -y^{2}\\
                   y^{4} & x^{2}
                  \end{bmatrix}
                  =(x^{2}+y^{6})\begin{bmatrix}
                  1 & 0\\
                  0 & 1
                  \end{bmatrix} \)
 \\\\
 This shows that $X\widehat{\otimes} X'$ is a matrix factorization of $f+g=x^{2}+y^{6}$ and it is of size $2(1)(1)=2$.
\end{example}
In the next example, we consider matrix factorizations in two variables.
\begin{example}\label{exple mat facto 2 vars}
Let \\\\
X=
(\(
\begin{bmatrix}
    x  &  -y      \\
    y  &  x
\end{bmatrix}
,
\begin{bmatrix}
    x  &  y      \\
    -y  &  x
\end{bmatrix}
\)), and
X'=
(\(
\begin{bmatrix}
    -x  &  y      \\
    -y  &  -x
\end{bmatrix}
,
\begin{bmatrix}
    x  &  y      \\
    -y  &  x
\end{bmatrix}
\))
\\
be $2\times 2$ matrix factorizations of $f=x^{2}+y^{2}$ and $g=-x^{2}-y^{2}$ respectively. Let
\\\\
A=
\(
\begin{bmatrix}
    x  &  -y      \\
    y  &  x
\end{bmatrix}
\otimes
\begin{bmatrix}
    1  &  0      \\
    0  &  1
\end{bmatrix}
\),
B=
\(
\begin{bmatrix}
    1  &  0      \\
    0 &  1
\end{bmatrix}
\otimes
\begin{bmatrix}
    -x  &  y      \\
    -y  &  -x
\end{bmatrix}
\),
C=
\(
\begin{bmatrix}
    -1 &  0      \\
    0  &  -1
\end{bmatrix}
\otimes
\begin{bmatrix}
    x  &  y      \\
    -y  &  x
\end{bmatrix}
\), \\\\
D=
\(
\begin{bmatrix}
    x  &  y      \\
    -y  &  x
\end{bmatrix}
\otimes
\begin{bmatrix}
    1  &  0      \\
    0  &  1
\end{bmatrix}
\),
A'=
\(
\begin{bmatrix}
    x  &  y      \\
    -y  &  x
\end{bmatrix}
\otimes
\begin{bmatrix}
    1  &  0      \\
    0  &  1
\end{bmatrix}
\),
B'=
\(
\begin{bmatrix}
    -1  &  0      \\
    0 &  -1
\end{bmatrix}
\otimes
\begin{bmatrix}
    -x  &  y      \\
    -y  &  -x
\end{bmatrix}
\),\\\\
C'=
\(
\begin{bmatrix}
    1 &  0      \\
    0  &  1
\end{bmatrix}
\otimes
\begin{bmatrix}
    x  &  y      \\
    -y  &  x
\end{bmatrix}
\), and
D'=
\(
\begin{bmatrix}
    x  &  -y      \\
    y  &  x
\end{bmatrix}
\otimes
\begin{bmatrix}
    1  &  0      \\
    0  &  1
\end{bmatrix}
\)\\\\
Then
\\
          \(X\widehat{\otimes} X'=(
                 \begin{bmatrix}
                  A & B\\
                  C & D
                  \end{bmatrix}
                  ,
                  \begin{bmatrix}
                  A' & B'\\
                  C' & D'
                  \end{bmatrix}
                  )\)
\\
\\
          \[=(\left(
                  \begin{array}{cccccccc}
                    x & 0 & -y & 0 & -x & y & 0 & 0 \\
                    0 & x & 0 & -y & -y & -x & 0 & 0 \\
                    y & 0 & x & 0 & 0 & 0 & -x & y \\
                    0 & y & 0 & x & 0 & 0 & -y & -x \\
                    -x & -y & 0 & 0 & x & 0 & y & 0 \\
                    y & -x & 0 & 0 & 0 & x & 0 & y \\
                    0 & 0 & -x & -y & -y & 0 & x & 0 \\
                    0 & 0 & y & -x & 0 & -y & 0 & x \\
                  \end{array}
                \right),\left(
                  \begin{array}{cccccccc}
                    x & 0 & y & 0 & x & -y & 0 & 0 \\
                    0 & x & 0 & y & y & x & 0 & 0 \\
                    -y & 0 & x & 0 & 0 & 0 & x & -y \\
                    0 & -y & 0 & x & 0 & 0 & y & x \\
                    x & y & 0 & 0 & x & 0 & -y & 0 \\
                    -y & x & 0 & 0 & 0 & x & 0 & -y \\
                    0 & 0 & x & y & y & 0 & x & 0 \\
                    0 & 0 & -y & x & 0 & y & 0 & x \\
                  \end{array}
                \right))\]
           \\
           If we call these two $8\times 8$ matrices $P$ and $Q$ respectively, then
%
$PQ=0\cdot I_{8}=0,$ where $I_{8}$ is the identity matrix of size $8$ and the last zero is the zero matrix of size $8$.
Hence, $X\widehat{\otimes} X'$ is a matrix factorization of $f+g=x^{2}+y^{2}-x^{2}-y^{2}=0$ of size $2(2)(2)=8$.
\end{example}

Yoshino's tensor product has three mutually distinct functorial variants as can be seen in \cite{fomatati2021tensor}
\section{Properties of Matrix Factorizations}

We will now state and prove some properties of matrix factorizations of polynomials. Some of them will be used when studying the notion of Morita Context between two objects in the bicategory $\mathcal{LG}_{K}$.\\
In this section, except otherwise stated $R=K[x_{1},\cdots,x_{n}]=K[x]$.
All statements and proofs in this section are taken from \cite{crisler2016matrix} except for proposition \ref{transpose of matrix facto}. They are reproduced here
(sometimes with slight modifications) for the purposes of completeness.\\
\begin{lemma}
If $PQ=fI_{n}$, then det($P$) divides $f^{n}$. If in addition, $f$ is irreducible in $R$, then det($P$) is a power of $f$.
\end{lemma}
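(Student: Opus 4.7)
The plan is to reduce everything to the multiplicativity of the determinant. Applying $\det(\cdot)$ to both sides of the defining identity $PQ=fI_{n}$, I would use that determinants of square matrices over a commutative ring are multiplicative, together with $\det(fI_{n})=f^{n}$, to obtain the scalar identity
\[
\det(P)\,\det(Q)\;=\;f^{n}.
\]
This immediately exhibits $\det(P)$ as a divisor of $f^{n}$ in $R$, which is the first assertion.

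For the second assertion, I would invoke unique factorization in $R$. When $K$ is a field (and more generally whenever $R=K[x_{1},\ldots,x_{n}]$ is a UFD, which follows from Gauss's lemma applied iteratively if $K$ itself is a UFD), irreducibility of $f$ coincides with primality. In the equation $\det(P)\,\det(Q)=f^{n}$, unique factorization into primes then forces $\det(P)=u\cdot f^{k}$ for some unit $u\in R^{\times}$ and some integer $k$ with $0\le k\le n$; in other words, $\det(P)$ is a power of $f$ up to a unit of the coefficient ring.

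The main (and essentially only) obstacle here is interpretive rather than computational: the phrase ``a power of $f$'' in the statement has to be understood as ``$f^{k}$ times a unit,'' and the argument genuinely needs $R$ to be a UFD, which is why I would be explicit about the hypothesis on $K$ when writing it up. Once this is acknowledged, the proof consists of one line of multiplicativity of $\det$ followed by one line appealing to unique factorization.
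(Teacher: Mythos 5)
Your proof is correct and follows essentially the same route as the paper's source (the paper defers this proof to its reference, where it is likewise obtained by applying multiplicativity of the determinant to $PQ=fI_{n}$ to get $\det(P)\det(Q)=f^{n}$ and then using unique factorization for the irreducible case). Your explicit caveats --- that the second assertion needs $R$ to be a UFD and that ``a power of $f$'' must be read up to a unit --- are accurate refinements rather than deviations.
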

\begin{corollary}\label{matrix facto is invertible 1}
If $0\neq f\in R$ and $PQ=fI_{n}$, then over the field of fractions $\mathcal{F}$ of $R$, $P$ is invertible.
\end{corollary}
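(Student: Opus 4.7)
The plan is to produce an explicit two-sided inverse for $P$ over $\mathcal{F}$ by exploiting the fact that $f$, being nonzero in $R$, becomes a unit in the field of fractions. The natural candidate is the matrix $f^{-1}Q$, viewed as a matrix with entries in $\mathcal{F}$. First I would verify directly that
\[
P\bigl(f^{-1}Q\bigr) \;=\; f^{-1}(PQ) \;=\; f^{-1}\bigl(fI_{n}\bigr) \;=\; I_{n},
\]
using the factorization hypothesis $PQ = fI_{n}$ together with the fact that scalar multiplication by $f^{-1} \in \mathcal{F}$ commutes with matrix multiplication. Since $P$ is a square matrix over a field, a one-sided inverse is automatically two-sided, so this already shows $P$ is invertible over $\mathcal{F}$, with $P^{-1} = f^{-1}Q$.

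An alternative, equally short route goes through determinants and would make the connection with the preceding lemma transparent: applying the multiplicative property of the determinant to $PQ = fI_{n}$ gives $\det(P)\det(Q) = f^{n}$, and since $f \neq 0$ in the integral domain $R$ we have $f^{n} \neq 0$, whence $\det(P) \neq 0$ in $R \subseteq \mathcal{F}$; invertibility of $P$ over $\mathcal{F}$ then follows from the standard criterion for matrices over a field.

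There is essentially no obstacle to overcome. The only point that deserves acknowledgement, rather than genuine effort, is the implicit hypothesis that $R$ is an integral domain, which is needed merely for the field of fractions $\mathcal{F}$ to be defined; under the standing convention of the paper that $R = K[x_{1},\ldots,x_{n}]$ with $K$ a commutative ring with unity, this is the standard tacit assumption whenever one speaks of $\mathcal{F}$.
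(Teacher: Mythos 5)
Your proof is correct. The paper gives no explicit proof of this corollary; it is presented as an immediate consequence of the preceding lemma asserting that $\det(P)$ divides $f^{n}$, so the intended argument is your second route: $\det(P)\det(Q)=f^{n}\neq 0$ in the integral domain $R$, hence $\det(P)\neq 0$ and $P$ is invertible over $\mathcal{F}$. Your primary route --- exhibiting $f^{-1}Q$ as an explicit two-sided inverse --- is genuinely different and in some ways better matched to how the paper actually uses the result: immediately after the corollary the author writes $Q=P^{-1}fI_{n}$, which is precisely your identity $P^{-1}=f^{-1}Q$ rearranged, and the later proposition that $QP=fI_{n}$ drops out of it at once. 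The explicit-inverse argument also bypasses the lemma entirely, needing only that $f$ becomes a unit in $\mathcal{F}$, whereas the determinant argument is what justifies calling the statement a corollary. Your closing caveat is apt: the paper nowhere restricts $K$, but the field of fractions $\mathcal{F}$ is only defined when $R=K[x_{1},\ldots,x_{n}]$ is an integral domain, which forces $K$ to be one; this tacit hypothesis is needed on either route (for $f^{n}\neq 0$ in one, for $f^{-1}$ to exist in the other).
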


Since $P$ is invertible over $\mathcal{F}$, the unique $Q$ such that $PQ=fI_{n}$ is $Q=P^{-1}fI_{n}=\frac{f}{det(P)}adj(P)$, where $adj(P)$ is the adjoint of the matrix $P$. \\

Now, $adj(P)$ is a matrix over $R=K[x_{1},\cdots,x_{n}]$. So, if $det(P)$ divides $f$, then $Q$ will also be a matrix over $R$.
However, it is possible for $Q$ not to have entries in $R$, and therefore $P$ will not appear in any matrix factorization of $f$ over $R$.
 We now prove that matrices appearing in a matrix factorization commute with each other.
\begin{proposition}\label{matrix facto commute}
If \;$0\neq f\in R$ and $P,\;Q$ are $n\times n$ matrices so that $PQ=fI_{n}$, then $QP=fI_{n}$.
\end{proposition}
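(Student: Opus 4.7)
The plan is to leverage Corollary \ref{matrix facto is invertible 1} and pass to the field of fractions $\mathcal{F}$ of $R$, where $P$ becomes an honest invertible matrix, and then descend the resulting equality back to $R$. This is the most natural route because over $\mathcal{F}$ the relation $PQ = fI_n$ uniquely determines $Q$ as $fP^{-1}$, and the commutativity of a matrix with its inverse is trivial.

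More precisely, I would first observe that by associativity of matrix multiplication we have
\[
P(QP) \;=\; (PQ)P \;=\; (fI_n)P \;=\; fP \;=\; P(fI_n),
\]
so that $P\bigl(QP - fI_n\bigr) = 0$ as an identity of $n\times n$ matrices over $R$. The key point is then to cancel $P$ from the left. Since $f \neq 0$ in $R$, Corollary \ref{matrix facto is invertible 1} guarantees that $P$ is invertible when regarded as a matrix over $\mathcal{F}$; viewing the identity $P(QP - fI_n) = 0$ over $\mathcal{F}$ and multiplying on the left by $P^{-1}$ yields $QP - fI_n = 0$, that is, $QP = fI_n$. All entries of $QP$ and of $fI_n$ already lie in $R \subseteq \mathcal{F}$, so the equality persists in $R$, as required.

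The only mild obstacle is the passage to $\mathcal{F}$, which tacitly assumes $R$ is a domain so that a field of fractions exists; this is exactly the standing assumption in play from the previous corollary, so no new hypothesis is introduced. If one wished to avoid $\mathcal{F}$ altogether one could instead multiply $P(QP - fI_n) = 0$ on the left by the classical adjugate $\operatorname{adj}(P)$ and use $\operatorname{adj}(P)\,P = \det(P)\,I_n$ together with the fact that $\det(P)\det(Q) = f^n \neq 0$ is a non-zero-divisor to cancel $\det(P)$; however, invoking $\mathcal{F}$ is cleaner and consistent with the style established earlier in the section.
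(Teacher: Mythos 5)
Your proof is correct and follows essentially the route the paper intends: the preceding discussion already notes that over the field of fractions $\mathcal{F}$ the relation $PQ=fI_{n}$ forces $Q=fP^{-1}$, from which $QP=fP^{-1}P=fI_{n}$, and your left-cancellation of $P$ in $P(QP-fI_{n})=0$ is the same idea phrased slightly differently. Your parenthetical caveat that passing to $\mathcal{F}$ presupposes $R$ is a domain is a fair observation, but it is a standing assumption the paper itself makes in Corollary \ref{matrix facto is invertible 1}, so nothing new is required.
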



\begin{corollary}\label{matrix facto is invertible 2}
If $0\neq f\in R$ and $PQ=fI_{n}$, then over the field of fractions $\mathcal{F}$ of $R$, $P$ and $Q$ are invertible.
\end{corollary}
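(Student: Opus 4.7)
The plan is to obtain this as an immediate consequence of the two results immediately preceding it, namely Corollary \ref{matrix facto is invertible 1} (which gives invertibility of $P$) and Proposition \ref{matrix facto commute} (which gives commutativity $QP = fI_n$). So I do not expect any real difficulty here; the corollary is essentially bookkeeping.

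First, I would apply Corollary \ref{matrix facto is invertible 1} directly to the hypothesis $PQ = fI_n$ to conclude that $P$ is invertible over the field of fractions $\mathcal{F}$. Next, I would invoke Proposition \ref{matrix facto commute} to obtain the companion identity $QP = fI_n$. This identity has exactly the same form as the one that fed into Corollary \ref{matrix facto is invertible 1}, only with the roles of $P$ and $Q$ interchanged. Applying Corollary \ref{matrix facto is invertible 1} once more, this time to the pair $(Q,P)$ with product $QP = fI_n$, yields that $Q$ is also invertible over $\mathcal{F}$.

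If one prefers an even more self-contained argument, one can note that $PQ = fI_n$ forces $\det(P)\det(Q) = f^n$ in the integral domain $R$, and since $f \neq 0$ both factors must be nonzero in $R$, hence units in $\mathcal{F}$; both $P$ and $Q$ are therefore invertible over $\mathcal{F}$. The only step that requires any care at all is ensuring that $R$ is an integral domain so that $f^n \neq 0$ implies $\det(P), \det(Q) \neq 0$, but since $R = K[x_1,\dots,x_n]$ with $K$ a commutative ring with unity, this reduction works provided $K$ is a domain; otherwise one should stick with the route via Corollary \ref{matrix facto is invertible 1} and Proposition \ref{matrix facto commute}, which make no such assumption.
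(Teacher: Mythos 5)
Your primary argument is correct and matches the paper's intended route exactly: the corollary is obtained by combining Corollary \ref{matrix facto is invertible 1} (invertibility of $P$) with Proposition \ref{matrix facto commute} (so that $QP=fI_n$ and the same corollary applies to $Q$). Your caveat about the determinant-based alternative requiring $K$ to be a domain is a sensible observation, but the main route needs no such assumption and is the one to keep.
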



\begin{remark}
It is important to note that corollary \ref{matrix facto is invertible 1} and corollary \ref{matrix facto is invertible 2} actually say that matrices that appear in a matrix factorization of a nonzero polynomial are invertible over the field of fractions $\mathcal{F}$ of $R$.
This result will soon be very useful when describing the notion of Morita Context in $\mathcal{LG}_{K}$.
\end{remark}


We state and prove another property of matrix factorizations thanks to which we can conclude that an $n\times n$ matrix factorization of a polynomial $f$ is not unique.

\begin{proposition} \label{transpose of matrix facto}
If $0\neq f\in R$ and $P,\;Q$ are $n\times n$ matrices such that $PQ=fI_{n}$, then $Q^{t}P^{t}=fI_{n}$,
where $Q^{t}$ (respectively $P^{t}$) stands for the transpose of $Q$ (respectively $P$).
\end{proposition}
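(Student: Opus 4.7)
The plan is to exploit the elementary transpose identity $(AB)^{t}=B^{t}A^{t}$ together with the fact that $I_{n}$ is symmetric, so the proof will be almost immediate. First I would observe that from the hypothesis $PQ=fI_{n}$, taking transposes on both sides yields $(PQ)^{t}=(fI_{n})^{t}$. Next, I would apply the reversal rule to the left-hand side to rewrite $(PQ)^{t}$ as $Q^{t}P^{t}$, and simplify the right-hand side using $(fI_{n})^{t}=fI_{n}^{t}=fI_{n}$, since $f$ is a scalar in $R$ and the identity matrix is symmetric. Combining these two identities gives the desired equality $Q^{t}P^{t}=fI_{n}$.

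As an alternative presentation (essentially equivalent, but perhaps worth mentioning for completeness), one could first invoke Proposition \ref{matrix facto commute}, which guarantees $QP=fI_{n}$, and then transpose both sides of that equation to obtain $P^{t}Q^{t}=fI_{n}$. This would actually prove a slightly stronger statement, namely that both $Q^{t}P^{t}$ and $P^{t}Q^{t}$ equal $fI_{n}$, meaning that $(Q^{t},P^{t})$ is itself a matrix factorization of $f$. That observation is precisely what justifies the concluding remark that the $n\times n$ matrix factorization of a given polynomial is not unique: starting from $(P,Q)$ one immediately produces a second factorization $(Q^{t},P^{t})$, which in general differs from $(P,Q)$.

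There is no real obstacle here. The only point that requires a moment of care is making sure that the scalar $f\in R$ commutes past the transpose, but this is trivial since $R$ is commutative and $f$ acts by ordinary scalar multiplication on the entries of $I_{n}$. Consequently, the proof will fit in two or three lines; the main value of the proposition lies not in its difficulty but in the corollary that matrix factorizations are highly non-unique, which will be useful in later sections when assembling Morita-context data from given $1$-morphisms in $\mathcal{LG}_{K}$.
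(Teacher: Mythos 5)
Your proof is correct and follows exactly the paper's own argument: transpose both sides of $PQ=fI_{n}$, apply $(PQ)^{t}=Q^{t}P^{t}$, and use that $fI_{n}$ is symmetric. The additional observation that combining this with Proposition \ref{matrix facto commute} shows $(Q^{t},P^{t})$ is itself a matrix factorization of $f$ is a nice supplementary remark, but the core argument is the same as in the paper.
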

\begin{proof}
Assume $0\neq f\in R$ and $P,\;Q$ are $n\times n$ matrices such that $PQ=fI_{n}$. then:
\begin{align*}
PQ=fI_{n} &\Rightarrow (PQ)^{t}=(fI_{n})^{t}\\
&\Rightarrow  Q^{t}P^{t}=fI_{n} \;since\;(f)^{t}=f\,viewed\;as\,a\,1\times 1\,matrix
\end{align*}
\end{proof}
Before we proceed, it is worthwhile stating some well-known facts in the literature, see notes on tensor products by Conrad (for points 1. and 2. below see theorem 4.9, example 4.11 of \cite{conrad2016tensor}, point 3. simply generalizes point 2.).
\begin{lemma}\label{free mod isos}
\begin{enumerate}
  \item Let $K[x_{1},x_{2},\cdots,x_{n}]$ and $K[x'_{1},x'_{2},\cdots,x'_{m}]$ be free $K-$modules with respective bases $\{e_{i}\}^{n}_{i=1}$ and $\{e'_{j}\}^{m}_{j=1}$. Then $\{e_{i}\otimes e'_{j}\}_{i=1,\cdots n; j=1,\cdots m}$ is a basis of $K[x_{1},x_{2},\cdots,x_{n}]\otimes K[x'_{1},x'_{2},\cdots,x'_{m}]$.
  \item The $K-$modules $K[x_{1},x_{2},\cdots,x_{n}]\otimes K[x'_{1},x'_{2},\cdots,x'_{m}]$ and $K[x_{1},x_{2},\cdots,x_{n},x'_{1},x'_{2},\cdots,x'_{m}]$ are isomorphic as $K-$modules.

  \item If we let $x$ stand for $x_{1},x_{2},\cdots,x_{n}$ and $x^{(l)}$ stand for $x^{(l)}_{1},x^{(l)}_{2},\cdots, x^{(l)}_{n_{l}}$, where $n,l,n_{l}\in \mathbb{N}$, $n=n_{0}$, and $\{x^{(l)}\}$ means $x$ with $l$ primes, e.g $x^{(2)}=x'',\;x^{(0)}:=x$.\\
       Then more generally, we have:
      $$K[x,x^{(1)},\cdots, x^{(l)}]\cong \bigotimes_{p=0,\cdots,l} K[x^{(p)}].$$
\end{enumerate}
\end{lemma}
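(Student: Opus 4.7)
The plan is to reduce all three claims to standard facts about tensor products of free modules over a commutative ring with unity, with the caveat that the ``basis'' must be interpreted as the monomial basis (indexed by multi-indices), not the variable set $\{x_1,\dots,x_n\}$ itself as the shorthand in the statement might suggest. Once that reinterpretation is made, the proofs are essentially bookkeeping plus the universal property.

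For part (1), I would invoke Theorem 4.9 of Conrad directly. As a $K$-module, $K[x_1,\dots,x_n]$ is free with basis the set of monomials $\mathcal{M}=\{x_1^{a_1}\cdots x_n^{a_n}\;|\;(a_1,\dots,a_n)\in\mathbb{N}^n\}$, and similarly $K[x'_1,\dots,x'_m]$ has the monomial basis $\mathcal{M}'$. The general theorem on bases of tensor products of free modules then yields that $\{e\otimes e'\;|\;e\in\mathcal{M},\;e'\in\mathcal{M}'\}$ is a $K$-basis of the tensor product, which is exactly what is claimed.

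For part (2), I would build the isomorphism explicitly. The multiplication map
\[
\mu_0:\,K[x_1,\dots,x_n]\times K[x'_1,\dots,x'_m]\;\longrightarrow\;K[x_1,\dots,x_n,x'_1,\dots,x'_m],\qquad (p,q)\mapsto pq,
\]
is $K$-bilinear, so by the universal property of the tensor product it factors through a unique $K$-linear map $\mu:K[x]\otimes K[x']\to K[x,x']$ sending $x^a\otimes (x')^b$ to $x^a(x')^b$. For an inverse, I would define $\nu$ on the monomial basis of $K[x,x']$ by $x^a(x')^b\mapsto x^a\otimes(x')^b$ and extend $K$-linearly; this is well-defined because $\{x^a(x')^b\}_{a,b}$ is a basis of $K[x,x']$. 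Checking $\mu\circ\nu=\mathrm{id}$ and $\nu\circ\mu=\mathrm{id}$ reduces, by linearity, to verifying them on basis elements, which is immediate; here part (1) guarantees that the pure tensors $x^a\otimes(x')^b$ indeed form a basis of the domain of $\mu$.

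For part (3), I would induct on $l$. The case $l=0$ is trivial and $l=1$ is exactly part (2). For the inductive step, I would use associativity of the tensor product together with the inductive hypothesis to write
\[
\bigotimes_{p=0}^{l} K[x^{(p)}]\;\cong\;\Bigl(\bigotimes_{p=0}^{l-1} K[x^{(p)}]\Bigr)\otimes K[x^{(l)}]\;\cong\;K[x,x^{(1)},\dots,x^{(l-1)}]\otimes K[x^{(l)}],
\]
and then apply part (2) once more to merge the last two factors into $K[x,x^{(1)},\dots,x^{(l)}]$. The only genuine obstacle is the notational one flagged above: ensuring the reader understands that the index set of the basis in part (1) is the set of multi-indices rather than the literal set $\{1,\dots,n\}$. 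Beyond that, the argument is entirely routine and uses nothing deeper than the universal property of $\otimes_K$ and its associativity.
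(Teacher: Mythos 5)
Your proof is correct and follows essentially the same route as the paper, which gives no argument of its own but simply cites Conrad's Theorem 4.9 and Example 4.11 for parts (1)--(2) and remarks that (3) generalizes (2); your write-up merely supplies the details (the universal-property construction of the multiplication isomorphism and the induction on $l$ via associativity) that the paper delegates to the reference. Your caveat that the bases must be read as the infinite monomial bases rather than literal $n$-element sets is a correct and worthwhile clarification of the statement's notation.
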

\section{The bicategory $\mathcal{LG}_{K}$ of Landau-Ginzburg models}
We quickly recall the construction of the bicategory $\mathcal{LG}_{K}$ of Landau-Ginzburg models. In definition 5.5 of \cite{fomatati2019multiplicative},
a $B-$category is defined to be a structure having all requirements of a bicategory but without necessarily satisfying the unit morphisms requirement. In order to easily recall the construction of $\mathcal{LG}_{K}$,
we first construct a $B-$category which we call $B-Fac$ and then we proceed to the unit construction. The objects of $B-Fac$ are polynomials $f$ denoted by pairs $(R,f)$ where $f\in R=K[x]$. Here we do not impose restrictions on the objects of our bicategory as was originally done in \cite{carqueville2016adjunctions}, where the authors instead consider \textit{potentials} (Definition 2.4, p.8 of \cite{carqueville2016adjunctions}). This generalization we do at the level of the objects does not pose a problem in the construction of $\mathcal{LG}_{K}$. The end result is simply a bicategory which has more objects than the original $\mathcal{LG}_{K}$ defined in \cite{carqueville2016adjunctions}, we still call it $\mathcal{LG}_{K}$.\\
We first recall the notion of linear factorizations which is an ingredient for the construction of $\mathcal{LG}_{K}$.
\begin{definition}(p.8 of \cite{carqueville2016adjunctions}) Linear factorization \label{def lin facto}\\
A \textbf{linear factorization} of $f\in R=K[x]$ is a $\mathbb{Z}_{2}-$graded $R-$module $X=X^{0}\oplus X^{1}$
together with an odd (i.e., grade reversing) $R-$linear endomorphism $d: X\longrightarrow X$
such that $d^{2}=f\cdot id_{X}$.\\
$f\cdot id_{X}$ stands for the endomorphism $x\mapsto f\cdot x,\,\,\forall x\in X$.
\end{definition}
 Since we are dealing with a $\mathbb{Z}_{2}-$grading and $d$ is odd, we can also say $d$ is a degree one map.
$d$ is called a \textit{twisted differential} in \cite{carqueville2015toolkit}.
 $d$ is actually a pair of maps $(d^{0},d^{1})$ that we may depict as follows:
$$\xymatrix {X^{0}\ar [r]^{d^{0}} &X^{1}\ar [r]^{d^{1}} & X^{0}}$$
 and the stated condition on them is:
$d^{0}\circ d^{1}=f\cdot id_{X^{1}}$ and $d^{1}\circ d^{0}= f\cdot id_{X^{0}}$. \\ If $X$ is a free $R-$module, then the pair $(X,d)$ is called a \textit{matrix factorization} and we often refer to it by $X$ without explicitly mentioning the differential $d$.
\begin{remark}
 If $M_{0}$ and $M_{1}$ are respectively matrices of the $R-$linear endomorphisms $d^{0}$ and $d^{1}$, then the pair $(M_{0},M_{1})$ would be a matrix factorization of $f$ according to definition \ref{defn matrix facto of polyn}.
\end{remark}
\begin{definition} (p.9 \cite{carqueville2016adjunctions}) Morphism of linear factorizations \label{morphism of matrix facto}\\
  A \textbf{morphism of linear factorizations} $(X,d_{X})$ and $(Y,d_{Y})$ is an even (i.e., a grade preserving) $R-$linear map $\phi: X \longrightarrow Y$ such that $d_{Y}\phi = \phi d_{X}$.
\end{definition}
Concretely (see page 19 of \cite{khovanov2008matrix}), $\phi$ is a pair of maps $\xymatrix {X^{0}\ar [r]^{\phi^{0}} &Y^{0}}$ and $\xymatrix {X^{1}\ar [r]^{\phi^{1}} &Y^{1}}$ such that the following diagram commutes:\\

 $$\xymatrix@ R=0.6in @ C=.75in{X^{0} \ar[r]^{{d^{0}_{X}}} \ar[d]_{\phi^{0}} &
X^{1} \ar[d]^{\phi^{1}} \ar[r]^{{d^{1}_{X}}} & X^{0}\ar[d]^{\phi^{0}}\\
Y^{0} \ar[r]^{{d^{0}_{Y}}} & Y^{1}\ar[r]^{{d^{1}_{Y}}} & Y^{0}}$$

\begin{definition} \label{homotopic lin facto} (p.9 \cite{carqueville2016adjunctions}) homotopic linear factorizations\\
 Let $(X,d_{X})$ and $(Y,d_{Y})$ be linear factorizations. Two morphisms $\varphi, \psi : X\longrightarrow Y$ are \textbf{homotopic} if there exists an odd $R-$linear map $\lambda: X\longrightarrow Y$ such that $d_{Y}\lambda + \lambda d_{X}= \psi - \varphi$.\\
 More precisely, the following diagram commutes:
$$\xymatrix@ R=0.6in @ C=.75in{X_{1} \ar[r]^{d_{X}} \ar[d]_{\psi-\phi} &
X_{0}\ar[dl]_{\lambda_{0}} \ar[d]^{\psi-\phi} \ar[r]^{d_{X}} & X_{1}\ar[dl]_{\lambda_{1}}\ar[d]^{\psi-\phi \,\,\,\,\,\,\,\,\,\,\,\,\,\,\,\dag}\\
Y_{1} \ar[r]^{d_{Y}} & Y_{0}\ar[r]^{d_{Y}} & Y_{1}}$$ i.e., $$d_{Y}\circ \lambda_{0} + \lambda_{1}\circ d_{X}=\psi-\phi$$
\end{definition}

\begin{notation}\label{notations for categ of facto} We keep the following notations used in \cite{carqueville2016adjunctions}:\\
$\bullet$ We denote by $HF(R, f)$ the category of linear factorizations of $f\in R$ modulo homotopy. \\
$\bullet$ We also denote by $HMF(R, f)$ its full subcategory of matrix factorizations. \\
$\bullet$ Furthermore, we write $hmf(R, f)$ for the full subcategory of finite rank matrix factorizations, viz. the matrix factorizations whose underlying $R$-module is free of finite rank.\\
\end{notation}

\begin{remark} (p.9 of \cite{carqueville2016adjunctions})\\
$HMF(R,f)$ is idempotent complete (\cite{bokstedt1993homotopy}, \cite{neeman2001triangulated}).
As earlier stated, we work with polynomials rather than power series, so $hmf(R,f)$ is not necessarily idempotent complete \cite{keller2011two}. The idempotent closure of $hmf(R,f)$ (denoted by $hmf(R,f)^{\omega}$) is a full subcategory of $HMF(R,f)$ whose objects are those matrix factorizations which are direct summands of finite-rank matrix factorizations in the homotopy category. Moreover, $hmf(R,f)^{\omega}$ is an idempotent complete category.
 Taking the idempotent completion is necessary because the composition of $1$-morphisms in $\mathcal{LG}_{K}$ results in matrix factorisations which, while not finite-rank, are summands in the homotopy category of something finite-rank. There are two natural ways to resolve this: work throughout with power series rings and completed tensor products, or work
with idempotent completions.
\end{remark}

Let ($R=K[x], f$) and ($S=K[y], g$) be elements of $B-Fac$. The small category $B-Fac((R,f),(S,g))$ is defined as follows:
$$B-Fac((R,f),(S,g)):=hmf(R\otimes S, 1_{R} \otimes g - f\otimes 1_{S} )^{\omega}=hmf(K[x,y] , g - f )^{\omega}$$
viz. a 1-morphism between two polynomials $f$ and $g$ is a matrix factorization of $g-f$.\\
Then given two composable $1-$cells $X\in B-Fac((R,f),(S,g))$ and $Y\in B-Fac((S,g),(T,h))$, we define their composition using Yoshino's tensor product as discussed in subsection \ref{Yoshino tensor pdt}. $Y\circ X:=Y\otimes_{S} X$ $\in\,HMF(R\otimes_{K}T, 1_{R}\otimes h - f\otimes 1_{T})$ which is a $\mathbb{Z}_{2}-$graded module, where:
$$(Y\otimes_{S} X)^{0}=(Y^{0}\otimes_{S} X^{0})\oplus (Y^{1}\otimes_{S} X^{1})\;\;and \;\; (Y\otimes_{S} X)^{1}=(Y^{0}\otimes_{S} X^{1})\oplus (Y^{1}\otimes_{S} X^{0}),$$
the differential (\cite{carqueville2016adjunctions}) is
 $$d_{Y\otimes X}=d_{Y}\otimes 1 + 1\otimes d_{X},$$ where the tensor product is taken over $S$
and $1\otimes d_{X}$ has the usual Koszul signs when applied to elements. That is;
$$d_{Y\otimes X}(y,x)=d_{Y}(y)\otimes x + (-1)^{i}y\otimes d_{X}(x)\,\,\,\,\,\,\,\,\,$$ (see \, p.28\,\cite{khovanov2008matrix}) where $y\in Y^{i}$.\\
 By remark 2.1.8 on p.29 of \cite{camacho2015matrix}, $Y\otimes_{S} X$ is a free module of infinite rank over $R\otimes_{K} T$.
 However, the argument of Section 12 of \cite{dyckerhoff2013pushing} shows that it is naturally isomorphic to a direct summand in the homotopy category of something finite-rank. So, we may define $Y\circ X:=Y\otimes_{S} X$ $\in\,hmf(R\otimes_{K}T, 1_{R}\otimes h - f\otimes 1_{T})^{\omega}= B-Fac((R,f),(T,h))$.\\

We now define the tensor product of morphisms of matrix factorizations. Let $X_{1}$, $X_{2}$ be
objects of $B-Fac((R,f),(S,g))$ and $Y_{1}$, $Y_{2}$ be
objects of $B-Fac((S,g), (T,h))$. Let $\alpha : X_{1} \rightarrow X_{2}$ and $\beta : Y_{1} \rightarrow Y_{2}$ be two morphisms, then we define their tensor product in the obvious way $\beta \otimes \alpha : Y_{1} \otimes X_{1} \rightarrow Y_{2}\otimes X_{2}$ in $B-Fac((R,f),(T,h))$.\\
With the above data, the composition (bi-)functor is entirely determined in our B-category:

$\star_{(R,f),(S,g),(T,h)}:B-Fac((R,f),(S,g))$ x  $B-Fac((S,g), (T,h))\rightarrow B-Fac((R,f),(T,h))$ $$(X, Y)\mapsto Y\otimes_{S} X$$
The definition of the associativity morphism is easy to state. In fact, for $X\in B-Fac((R,f),(S,g))$, $Y\in B-Fac((S,g),(T,h))$ and $Z\in B-Fac((T,h), (P,r))$, the associator is the 2-isomorphism $$a_{Z,Y,X}: Z\otimes (Y\otimes X)\rightarrow (Z\otimes Y)\otimes X$$ given by the usual formula $$z\otimes (y\otimes x)\rightarrow (z\otimes y)\otimes x$$
where $x\in X$, $y\in Y$ and $z\in Z$.
\begin{lemma}
  $B-Fac$ is a $B-$category.
\end{lemma}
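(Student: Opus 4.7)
The plan is to verify the three remaining pieces of data of a bicategory (dropping only the unit axioms): each hom-collection is a category, composition is a bifunctor, and the associator is a natural $2$-isomorphism satisfying the pentagon coherence. The first item is immediate because $B\text{-}Fac((R,f),(S,g))=hmf(R\otimes S,\,g-f)^{\omega}$ is, by construction, the idempotent completion of a homotopy category of matrix factorizations, hence a category.

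Next I would verify that $\star$ is a bifunctor. On objects, the Yoshino tensor product $Y\otimes_{S}X$ equipped with $d_{Y\otimes X}=d_{Y}\otimes 1+1\otimes d_{X}$ (Koszul signs understood) satisfies $d_{Y\otimes X}^{2}=(h-f)\cdot \mathrm{id}$, using that $d_{Y}^{2}=(h-g)\cdot\mathrm{id}$ and $d_{X}^{2}=(g-f)\cdot\mathrm{id}$ and that the cross terms cancel by the Koszul sign rule; by the remark cited from \cite{dyckerhoff2013pushing}, the result lies in $hmf(R\otimes T,\,h-f)^{\omega}$. On morphisms, $\beta\otimes\alpha$ is by definition a morphism of the underlying $\mathbb{Z}_{2}$-graded free modules; I would check it commutes with the differentials by a direct two-line computation using $d_{Y_{i}}\beta=\beta d_{X_{i}}$ and $d_{Y_{j}}\alpha=\alpha d_{X_{j}}$. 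Preservation of identities and composition is automatic from the module-level tensor product, and respect for homotopy (so that the operation descends) follows from $\beta\otimes\lambda$ being an odd $R$-linear map realising the required chain homotopy $\beta\otimes\alpha_{1}-\beta\otimes\alpha_{2}$.

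Third, I would show the associator $a_{Z,Y,X}$ sending $z\otimes(y\otimes x)\mapsto (z\otimes y)\otimes x$ is a $2$-isomorphism. Its underlying $R\otimes P$-module map is the classical associativity isomorphism of tensor products, with obvious inverse. The key verification is that $a_{Z,Y,X}$ intertwines the two differentials: expanding $d_{Z\otimes(Y\otimes X)}$ and $d_{(Z\otimes Y)\otimes X}$ via $d_{A\otimes B}=d_{A}\otimes 1+1\otimes d_{B}$ with Koszul signs, both reduce to
\[
d_{Z}\otimes 1\otimes 1 \;+\; (-1)^{|z|}\,1\otimes d_{Y}\otimes 1 \;+\; (-1)^{|z|+|y|}\,1\otimes 1\otimes d_{X},
\]
so $a_{Z,Y,X}$ is a morphism of linear factorizations. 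Naturality in all three slots follows from the naturality of the module-level associator.

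Finally, the pentagon axiom for four composable $1$-cells $W,X,Y,Z$ reduces, after unwinding, to the strict pentagon equality for the module associator, so the diagram commutes on the nose. The one real obstacle is the bookkeeping with Koszul signs when checking both that $\star$ is well-defined on $2$-morphisms up to homotopy and that $a_{Z,Y,X}$ commutes with the differentials; once those sign verifications are dispatched, every coherence axiom that remains in a $B$-category follows from the corresponding strict property of tensor products of $K$-modules.
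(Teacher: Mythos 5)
Your verification is correct, and it matches what the paper implicitly relies on: the paper states this lemma without any proof, taking the preceding construction (hom-categories $hmf(-)^{\omega}$, Yoshino tensor product as composition, the standard associator) as self-evidently satisfying the $B$-category axioms. Your write-up simply makes explicit the three checks the paper leaves to the reader --- the differential on $Y\otimes_{S}X$ squaring to $(h-f)\cdot\mathrm{id}$, bifunctoriality descending to homotopy classes, and the associator commuting with the differentials so that the pentagon reduces to the strict module-level identity --- and each of these is handled correctly.
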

We now discuss the construction of the units in $\mathcal{LG}_{K}$.
\subsection{Unit $1-$morphisms in $\mathcal{LG}_{K}$} \label{subsec: unit 1-morphisms in LGk}
Here, we will construct the identity 1-cells. We let $R=K[x_{1},x_{2},\cdots,x_{n}]$. \\

From lemma \ref{free mod isos}, we have in particular that $$R\otimes_{K} R \cong K[x_{1},x_{2},\cdots,x_{n},x'_{1},x'_{2},\cdots,x'_{n}].$$
where $x_{i}=x_{i}\otimes 1$ and $x^{'}_{i}= 1 \otimes x_{i}$.
\\
The subscript $"K"$ in $\otimes_{K}$ will be very often omitted for ease of notation.
We need an object $\Delta_{f}: (R,f) \rightarrow (R,f)$ in
$hmf(R\otimes R,  f\otimes id - id\otimes f)$ or equivalently,\\
$hmf(K[x_{1},x_{2},\cdots,x_{n},x'_{1},x'_{2},\cdots,x'_{n}], h(x,x')),$ where $h(x,x')= f(x)- f(x'),$ where $id$ stands for $1_{R}$. \\
Recall (cf. section 5.5 of \cite{smith2011introduction}): The exterior algebra $\bigwedge(V)$ of a vector space $V$ over a field K is defined as the quotient algebra of the tensor algebra; $T(V)=\bigoplus_{i=1}^{\infty}T^{i}(V)=K\bigoplus V \bigoplus (V\otimes V)\bigoplus (V\otimes V\otimes V)\bigoplus \cdots ,$ by the two-sided ideal $I$ generated by all elements of the form $x \otimes x$ for $x \in V$. Symbolically, $\bigwedge(V)=T(V)/I$. The exterior product $\wedge$ of two elements of $\bigwedge(V)$ is the product induced by the tensor product $\otimes$ of $T(V)$. That is, if

     $\pi: T ( V )\rightarrow \bigwedge( V ) = T ( V ) / I $

is the canonical surjection, and if a and b are in $\bigwedge(V)$, then there are $\alpha$  and  $\beta$ in $T(V)$ such that $a = \pi ( \alpha )$ and $b = \pi ( \beta )$ and $a\wedge b=\pi(\alpha \otimes \beta)$.
 Let $\theta_{1},\theta_{2},\cdots, \theta_{n}$ be formal symbols\footnote{That is, we declare those symbols to be linearly independent by definition.} We consider the $R\otimes R-$module:
 $$\Delta_{f}=\bigwedge\{\bigoplus_{i=1}^{n}(R\otimes R)\theta_{i}\}$$

 This is an exterior algebra generated by $n$ anti-commuting variables, the $\theta_{i}s$
 modulo the relations that the $\theta$'s anti-commute, that is $\theta_{i}\wedge \theta_{j}=-\theta_{j}\wedge \theta_{i}$. Typically, we will omit the wedge product and write for instance $\theta_{i}\wedge \theta_{j}$ simply as $\theta_{i}\theta_{j}$. Here, the "$\wedge$" product is taken over K just like the tensor product.
 A typical element of $\Delta_{f}$ is $(r\otimes r')\theta_{i_{1}}\theta_{i_{2}}\cdots \theta_{i_{k}}$ or equivalently $h(x_{1},x_{2},\cdots,x_{n},x'_{1},x'_{2},\cdots,x'_{n})\theta_{i_{1}}\theta_{i_{2}}\cdots \theta_{i_{k}}$ where $i_{1},\cdots, i_{k}\in \{1,\cdots,n\}$ and $h(x_{1},x_{2},\cdots,x_{n},x'_{1},x'_{2},\cdots,x'_{n})$ $\in$
 $K[x_{1},x_{2},\cdots,x_{n},x'_{1},x'_{2},\cdots,x'_{n}]$.\\
 $\Delta_{f}$ as an algebra is finitely generated by the set of formal symbols $\{\theta_{1},\cdots, \theta_{n}\}$. \\
$\Delta_{f}$ as an $R\otimes R-$module is generated by the set containing the empty list and all products of the form $\theta_{i_{1}}\dots \theta_{i_{k}}$ where $i_{1},\cdots, i_{k} \in \{1,\cdots,n\}$.
  The action of $R\otimes R$ is the obvious one.

  $\Delta_{f}$ is endowed with the $\mathbb{Z}_{2}-$grading given by $\theta-$degree (where deg$\theta_{i}=1$ for each $i$). Thus $deg\theta_{i}^{2}=0\;and\;deg\theta_{i}\theta_{j}=0$.
 \\
  Next, we define the differential as follows:\\
  $$d: \Delta_{f}\longrightarrow \Delta_{f}$$
  $$d(-)=\sum_{i=1}^{n}[(x_{i}-x_{i}^{'})\theta_{i}^{\ast}(-)+\partial_{i}(f)\theta_{i}\wedge (-)]\,\,\,\,\,\,\,\,\,\,\,\,\,\,\,\,\cdots\,\,\,\natural$$
Where $\theta_{i}^{\ast}$ is the unique derivation extending the map $\theta_{i}^{\ast}(\theta_{j})=\delta_{ij}$ and as mentioned in \cite{carqueville2016adjunctions}, it acts on an element $\theta_{i_{1}}\theta_{i_{2}}\cdots \theta_{i_{k}}$ of the
exterior algebra by the Leibniz rule with Koszul signs.
In fact,
$$
\theta_{i}^{\ast}(\theta_{j_{1}}\theta_{j_{2}}\cdots \theta_{j_{k}})=
\begin{cases}
0\,for\,i\neq j,\,\forall j\in \{{j_{1}},{j_{2}},\cdots,{j_{k}}\}\\
(-1)^{p+1}\theta_{j_{1}}\theta_{j_{2}}\cdots \hat{\theta_{i}}\cdots \theta_{j_{k}}\,\,otherwise
\end{cases}
$$

where
 $\hat{\theta_{i}}$ signifies
that $\theta_{i}$ has been removed, and $p$ is the position of $\theta_{i}$ in $\theta_{j_{1}}\theta_{j_{2}}\cdots \theta_{i}\cdots \theta_{j_{k}}$\\

And:
$$\partial_{i}: k[x_{1},\cdots,x_{n},x_{1}^{'},\cdots,x_{n}^{'}]\longrightarrow k[x_{1},\cdots,x_{n},x_{1}^{'},\cdots,x_{n}^{'}]$$ is defined by,

$$\partial_{i}(h)=\frac{h(x_{1}^{'},\cdots,x_{i-1}^{'},x_{i},\cdots,x_{n},x_{1}^{'},\cdots,x_{n}^{'})-h(x_{1}^{'},
\cdots,x_{i}^{'},x_{i+1},\cdots,x_{n},x_{1}^{'},\cdots,x_{n}^{'})}{x_{i}-x_{i}^{'}}$$
where for ease of notation we wrote $h$ as argument of $\partial_{i}$ instead of the more cumbersome notation
$\partial_{i}(h(x_{1},\cdots,x_{n},x_{1}^{'},\cdots,x_{n}^{'}))$.
The following lemma will be useful in the definition of the left and the right units of $\mathcal{LG}_{K}$.
\begin{lemma} \label{projection map}

There is a canonical map of factorizations \\$\pi: \Delta_{f}\longrightarrow R$ given by
$\pi[(r\otimes r')\theta_{i_{1}}\theta_{i_{2}}\cdots \theta_{i_{k}}]=\delta_{k,0}rr'$.
$\pi$ is in fact the composition of the projection $\pi^{\ast}: \Delta_{f}\longrightarrow R\otimes R$ to $\theta-$degree $0$, followed by the multiplication map $m: R\otimes R \longrightarrow R=R_{0}\oplus R_{1}$, where we endow $R$ with the trivial grading i.e., $R_{0}=R$ and $R_{1}=\{0\}$.
\end{lemma}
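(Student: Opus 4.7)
The plan is to verify that $\pi$ is a well-defined morphism of linear factorizations in the sense of the earlier definition of morphism of matrix factorizations: it must be $R\otimes R$-linear, preserve the $\mathbb{Z}_2$-grading, factor as the stated composition $m\circ\pi^\ast$, and satisfy $d_R\circ\pi=\pi\circ d_{\Delta_f}$. Here $R$ carries the trivial $\mathbb{Z}_2$-grading $R_0=R$, $R_1=\{0\}$, which forces $d_R=0$ (the zero map is the only odd $R$-linear endomorphism of a module concentrated in even degree), so the chain-map identity collapses to $\pi\circ d_{\Delta_f}=0$.

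First I would dispatch the easy pieces. The formula $\pi[(r\otimes r')\theta_{i_{1}}\cdots\theta_{i_{k}}]=\delta_{k,0}rr'$ is $R\otimes R$-linear by inspection and well-defined on the $R\otimes R$-module generators of $\Delta_{f}$ given by products of distinct $\theta$'s. The factorization $\pi=m\circ\pi^{\ast}$ then unwinds directly: $\pi^{\ast}$ projects onto the $\theta$-degree $0$ summand $R\otimes R\subset\Delta_{f}$, killing all positive $\theta$-degrees, and then $m(r\otimes r')=rr'$ lands in $R=R_{0}$. Grading preservation is immediate since $\pi$ is supported on $\theta$-degree $0$ (which is even) and its image lies in $R_{0}$.

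The substantive step is $\pi\circ d_{\Delta_{f}}=0$. Because $\pi$ vanishes outside $\theta$-degree $0$ and the differential $\natural$ sends $\theta$-degree $k$ into $\theta$-degrees $k-1$ and $k+1$, only homogeneous inputs of $\theta$-degree $1$ can possibly contribute. For a generator $(r\otimes r')\theta_{j}$, formula $\natural$ together with $\theta_{i}^{\ast}(\theta_{j})=\delta_{ij}$ gives
\begin{align*}
d_{\Delta_{f}}\bigl((r\otimes r')\theta_{j}\bigr)=(x_{j}-x_{j}')(r\otimes r')+\sum_{i=1}^{n}\partial_{i}(f)\,\theta_{i}\wedge(r\otimes r')\theta_{j},
\end{align*}
and the second sum, being of $\theta$-degree $2$, is annihilated by $\pi$. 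Writing $x_{j}=x_{j}\otimes 1$ and $x_{j}'=1\otimes x_{j}$ in $R\otimes R$, the first term expands to $(x_{j}r)\otimes r'-r\otimes(x_{j}r')$, which $m$ sends to $x_{j}rr'-x_{j}rr'=0$. This exhausts the check, since $d_{\Delta_{f}}$ of any element of $\theta$-degree $\neq 1$ lies entirely outside $\theta$-degree $0$ and is already killed by $\pi^{\ast}$.

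The only potential obstacle is the careful accounting of Koszul signs inside $\theta_{i}^{\ast}$; however, since only $\theta$-degree $1$ inputs are relevant and there the formula $\theta_{i}^{\ast}(\theta_{j})=\delta_{ij}$ is sign-free, the bookkeeping collapses cleanly. The cancellation $x_{j}-x_{j}'\mapsto 0$ under $m$ is precisely the content that makes the bar-complex-like expression $\sum_{i}(x_{i}-x_{i}')\theta_{i}^{\ast}+\sum_{i}\partial_{i}(f)\theta_{i}\wedge$ descend to zero on the diagonal copy of $R$, which is exactly the ingredient needed downstream for $\Delta_{f}$ to serve as the unit $1$-morphism at $(R,f)$.
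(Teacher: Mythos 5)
Your verification is correct, but note that the paper itself supplies no proof of this lemma at all: it is stated as a recall of the unit construction from the cited literature (Carqueville--Murfet) and immediately used. So there is no ``paper's approach'' to compare against; what you have written is a genuine completion. Your reduction is the right one: with $R_{1}=\{0\}$ the only odd endomorphism of $R$ is $0$, so the chain condition collapses to $\pi\circ d_{\Delta_{f}}=0$, and since the contraction term of $\natural$ lowers $\theta$-degree by one while the wedge term raises it by one, only $\theta$-degree-one inputs survive $\pi^{\ast}$; there the computation $m\bigl((x_{j}\otimes 1-1\otimes x_{j})(r\otimes r')\bigr)=x_{j}rr'-x_{j}rr'=0$ finishes the argument, exactly as you say. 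Two small points you could make explicit for completeness: first, for $R$ with zero differential to be an object of the same category as $\Delta_{f}$ you should observe that $0^{2}=(f\otimes 1-1\otimes f)\cdot\mathrm{id}_{R}$ holds because $f\otimes 1-1\otimes f$ acts as $fr-fr=0$ on $R$ through the multiplication map $m$; second, well-definedness on the exterior-algebra generators is vacuous precisely because $\pi$ kills every monomial with $k\geq 1$, so the anticommutation relations impose no constraint --- worth one sentence rather than ``by inspection.'' Neither is a gap in substance.
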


\subsection{The left and the right units of $\mathcal{LG}_{K}$}
In this subsection, we recall the definition of the left and right identities of the bicategory $\mathcal{LG}_{K}$.
We will denote the right (respectively left) unit by $\rho$ (respectively $\lambda$).
\\
Consider a $1$-morphism $X\in$ $hmf(R\otimes S, 1_{R}\otimes g - f\otimes 1_{S})^{\omega}=hmf(R\otimes S, id\otimes g - f\otimes id)^{\omega}$.\\
Thus, $X$ is a matrix factorization of $id\otimes g - f\otimes id$ and is also an $R\otimes S-$module.\\
Let $1_{X}:X \longrightarrow X$ be the identity map and $\pi$ be the projection defined in lemma \ref{projection map}.
\begin{remark}
  Observe that any $S-$module $N$ can be considered as an $R-$module by letting $rn:=f(r)n$ where $f: R\longrightarrow S$ is a homomorphism of commutative rings.
It is easy to see that the $R\otimes S-$module $X$ can be considered as an $R-$module via the following $K-$homomorphism of commutative unitary rings $f: R\longrightarrow R\otimes S$ defined
 by $f(r)=r\otimes 1_{S}$,
and hence one can also see $X$ as an $R\otimes R-$module by means of the following (multiplicative map which is a) $K-$homomorphism of commutative unitary rings $m: R\otimes R\longrightarrow R$ defined by $m(r\otimes r')=rr'$.
It is not difficult to see that the $R\otimes R-$module $\Delta_{f}$ can be considered as an $R-$module via the following homomorphism of commutative unitary rings $f: R\longrightarrow R\otimes R$ defined
by $f(r)=r\otimes 1_{R}$.\\
\end{remark}
Thanks to this remark, it makes sense to form the following tensor product over $R$: $X\otimes_{R} R$ and
$X\otimes_{R} \Delta_{f}$ since $X$ and $\Delta_{f}$ can be viewed as $R-$modules. Consequently, we will simply write $X\otimes R$ and $X\otimes \Delta_{f}$ for ease of notation.\\
Similarly, since the $R\otimes S-$module $X$ and the $S\otimes S-$module $\Delta_{g}$ can be viewed as $S-$modules, we can form the module $\Delta_{g} \otimes_{S} X$ that we simply write as $\Delta_{g} \otimes X$.
\\
Also consider the map $u: X\otimes R\longrightarrow X$ defined by $u(x\otimes r)=xr$. This definition makes sense since $X$ can be viewed as an $R-$module. $u$ is an isomorphism (See example 1 page 363 of \cite{dummit2004abstract}).\\

Now, define $\rho_{X}: X\otimes \Delta_{f}\longrightarrow X$ by $\rho_{X}:=u\circ (1_{X}\otimes \pi)$ and $\lambda_{X}: \Delta_{g}\otimes X\longrightarrow X$ by $\lambda_{X}:=u\circ (\pi\otimes 1_{X})$.
\\
$\rho_{X}$ and $\lambda_{X}$ are clearly morphisms in $hmf(R\otimes S, id\otimes g - f\otimes id)^{\omega}$.
\\
$\rho$ is natural w.r.t. $2-$morphisms in the variable $X$ and there is no direct inverse
 for $\rho_{X}$, for each $X$ (See section 5.2 of \cite{fomatati2019multiplicative}).
\section{Morita contexts in $\mathcal{LG}_{K}$}\label{morita context in LG}
Before discussing the notion of Morita context in $\mathcal{LG}_{K}$, we define what it is in an arbitrary bicategory.
\begin{definition} \cite{pecsi2012morita} \label{defn morita context}\\
Let $\mathcal{B}$ be a bicategory with natural isomorphisms a, r and l. Given two 0-cells A and B, we define a \textbf{Morita Context} between A and B as a four-tuple $\Gamma = (f,g,\eta,\rho)$
consisting of two 1-cells $f\in Hom_{\mathcal{B}}(A,B)$ and $g\in Hom_{\mathcal{B}}(B,A)$, and two 2-cells $\eta: fg\rightarrow 1_{A}$ and $\rho: gf\rightarrow 1_{B}$ such that the following diagrams commute\\

$\xymatrix @ R=0.5in @ C=.6in{
(gf)g \ar[rr]^{a}\ar[d]_{\rho 1} &&
g(fg) \ar[d]^{1\eta} &\\
1_{B}g \ar[rd]_l &&
g1_{A} \ar[ld]^{r} &\\
&g} \xymatrix @ R=0.5in @ C=.6in{
(fg)f \ar[rr]^{a}\ar[d]_{\eta 1} &&
f(gf) \ar[d]^{1\rho} &\\
1_{A}f \ar[rd]_l &&
f1_{B} \ar[ld]^{r} &\\
& f} $\\

Equationally, we have:
\begin{enumerate}
\item $r\circ 1\eta \circ a= l\circ \rho 1$
\item $r\circ 1\rho \circ a= l\circ \eta 1$
\end{enumerate}
\end{definition}
\begin{remark}\cite{pecsi2012morita}
\begin{itemize}
\item Observe that any adjunction $<\xymatrix{ A \ar[r]^-f & B },\xymatrix{ B \ar[r]^-g & A },\xymatrix{ 1_{A} \ar[r]^-\varepsilon & fg },\xymatrix{ gf \ar[r]^-\eta & 1_{B} }>$
     becomes a \textit{Morita context} as soon as its unit $\varepsilon$ is invertible.
\item
A \textit{Morita context} is \textbf{strict} if both $\eta$ and $\rho$ in the foregoing definition are isomorphisms. \textit{Strict Morita contexts} and adjoint equivalences are basically the same. One can switch between them by inverting the unit $\varepsilon$ of the adjunction.
\end{itemize}
\end{remark}
\textbf{Nota Bene}: It is perhaps good to mention that what is called \textit{Morita context} in this paper is instead called \textit{wide right Morita context} from $B$ to $A$ in \cite{kaoutit2006wide}. In \cite{pecsi2012morita}, it is also called an \textit{abstract bridge}. Both authors declare that the notion of \textit{left Morita context} is defined by reversing 2-cells. We will not deal with \textit{left Morita context} in our work.\\

In the sequel, we will write $K[x]$ (respectively $K[y]$) for $K[x_{1},x_{2},\cdots,x_{n}]$ ($K[y_{1},y_{2},\cdots,y_{m}]$), where $x_{i}$ and $y_{j}$ are indeterminates for $i,j \in\{1,2,\cdots, max(n,m)\}$. \\

\textbf{Description of Morita context in $\mathcal{LG}_{K}$}\\
Let $(R,f)$ and $(S,g)$ be two objects of $\mathcal{LG}_{K}$, that is polynomials such that $f\in R= K[x]$ and $g\in S= K[y]$. In all of this section, except otherwise stated, we want to keep the following remark and assumption in mind:
\begin{remark} Let $X\in \mathcal{LG}_{K}((R,f),(S,g))$ and $Y\in \mathcal{LG}_{K}((S,g),(R,f))$ be $1-$morphisms of $\mathcal{LG}_{K}$, we normally have:
\begin{enumerate}
\item $X\in \mathcal{LG}_{K}((R,f),(S,g))= hmf(R\otimes_{K} S, g-f)^{\omega}$ i.e., $X:(R,f) \rightarrow (S,g)$ is a matrix factorization which is a direct summand of a finite rank matrix factorization of $g-f$.
\item $Y\in \mathcal{LG}_{K}((S,g),(R,f))= hmf(S\otimes_{K} R, f-g)^{\omega}$ i.e., $Y:(S,g) \rightarrow (R,f)$ is a matrix factorization which is a direct summand of a finite rank matrix factorization of  $f-g$.
\item $\Delta_{f}\in \mathcal{LG}_{K}((R,f),(R,f))= hmf(R\otimes_{K} R, f\otimes id - id\otimes f)$ i.e., $\Delta_{f}:(R,f) \rightarrow (R,f)$ is a finite rank matrix factorization of $f\otimes id - id\otimes f $.

\item $\Delta_{g}\in \mathcal{LG}_{K}((S,g),(S,g))= hmf(S\otimes_{K} S, g\otimes id - id\otimes g)$ i.e., $\Delta_{g}:(S,g) \rightarrow (S,g)$ is a finite rank matrix factorization of $g\otimes id - id\otimes g $.
\end{enumerate}
Observe that the $1-$morphism $X$ as defined in the previous remark can also be a finite rank matrix factorization i.e., an object of $hmf(R\otimes_{K} S, g-f)$ which is a subcategory of $hmf(R\otimes_{K} S, g-f)^{\omega}$. A similar observation holds for $Y$.\\
We would like to use determinant of matrices to discuss the notion of Morita context in $\mathcal{LG}_{K}$, therefore, it is important for us to deal with entities that are of finite rank. That is why we need the following assumption.\\
\textbf{\textit{Assumption}}:
We restrict our study of Morita context in $\mathcal{LG}_{K}$ to those objects $(R,f)$ and $(S,g)$ which are such that the following $1-$morphisms are all finite rank matrix factorizations: $X$, $Y$, $X\otimes Y$ and $Y\otimes X$.\\
Since we will only be dealing with finite rank matrix factorizations, we will sometimes intentionally omit the phrase "finite rank" in front of the phrase "matrix factorization",.
\end{remark}
A Morita Context between objects $(R,f)$ and $(S,g)$ of $\mathcal{LG}_{K}$ is a four-tuple
$\Gamma = (X,Y,\eta,\rho)$ where:
\begin{itemize}
\item $X:(R,f) \rightarrow (S,g)$ is a matrix factorization of $g-f$.
\item $Y:(S,g) \rightarrow (R,f)$ is a matrix factorization of $f-g$.
\item $\eta: X\otimes_{R} Y \rightarrow 1_{(R,f)}=\Delta_{f}$ where
 $\Delta_{f}$ is the identity on $(R,f)$ in $\mathcal{LG}_{K}$ i.e., a matrix factorization of $f\otimes id - id\otimes f $ as seen in the previous section.
\item $\rho: Y\otimes_{S} X \rightarrow 1_{(S,g)}=\Delta_{g}$\\

such that the following diagrams (thereafter referred to as the $M.C.LG_{K}$ diagrams) commute up to homotopy:\\

$\xymatrix @ R=0.4in @ C=.21in{
(Y\otimes_{S} X)\otimes_{R} Y \ar[rr]^{a}\ar[d]_{\rho \otimes 1_{Y}} &&
Y\otimes_{S} (X\otimes_{R} Y) \ar[d]^{1_{Y}\otimes\eta} &\\
\Delta_{g}\otimes_{S} Y \ar[rd]_{l_{Y}} &&
Y\otimes_{R} \Delta_{f} \ar[ld]^{r_{Y}} &\\
& Y} \xymatrix @ R=0.4in @ C=.21in{
(X\otimes_{R} Y)\otimes_{S} X \ar[rr]^{a}\ar[d]_{\eta \otimes 1_{X}} &&
X\otimes_{R} (Y\otimes_{S} X) \ar[d]^{1_{X}\otimes\rho} &\\
\Delta_{f}\otimes_{R} X \ar[rd]_{l_{X}} &&
X\otimes_{S} \Delta_{g} \ar[ld]^{r_{X}} &\\
& X} $\\

 That is, the following two conditions hold:
\begin{enumerate}
\item $r_{Y}\circ 1_{Y}\otimes\eta \circ a\approx l_{Y}\circ \rho\otimes 1_{Y}$
\item $r_{X}\circ 1_{X}\otimes\rho \circ a \approx l_{X}\circ \eta \otimes 1_{X}$
\end{enumerate}
where $\approx$ stands for the homotopy relation.
\end{itemize}
equivalently:
\begin{enumerate}
\item $\exists \lambda: Z \rightarrow Y$ s.t. $d_{Y}\lambda + \lambda d_{Z}= \psi - \phi$, where $Z=(Y\otimes_{S} X)\otimes_{R} Y$, $\psi=r_{Y}\circ 1_{Y}\otimes\eta \circ a$, and $\phi=l_{Y}\circ \rho\otimes 1_{Y}$

\item $\exists \xi: Z' \rightarrow X$ s.t. $d_{X}\xi + \xi d_{Z'}= \psi' - \phi'$, where $Z'=(X\otimes_{R} Y)\otimes_{S} X$, $\psi'=r_{X}\circ 1_{X}\otimes\rho \circ a$, and $\phi'=l_{X}\circ \eta \otimes 1_{X}$
\end{enumerate}
%
We would now like to find necessary conditions on $\eta$ and $\rho$
to obtain a \textit{Morita context} in $\mathcal{LG}_{K}$. We will use a matrix approach because it is easier to use matrices than linear transformations in our setting. Thus, we will have recourse to one of the properties of matrix factorizations we studied earlier (cf. corollaries \ref{matrix facto is invertible 1} and \ref{matrix facto is invertible 2}); namely that matrices that appear in a matrix factorization of a nonzero polynomial are invertible.\\
In the sequel, except otherwise stated, matrices that appear in a matrix factorization are of a fixed size
$n\in \mathbb{N}$. So we will not bother to mention the sizes of pair of matrices constituting a matrix factorization.\\
Let $(P,Q)$ and $(R,T)$ be pairs of matrices representing respectively the matrix factorizations $X\otimes_{R} Y$ and $\Delta_{f}$. We assume $f$ is not a constant polynomial, thus $\Delta_{f}$ is not the zero polynomial and so, by corollaries \ref{matrix facto is invertible 1} and \ref{matrix facto is invertible 2}, we have that $R$ (respectively $T$) is invertible over the field of fractions of $R$ (respectively over the field of fractions of $T$). \\
We know that \\
$\eta =(\eta^{0}, \eta^{1}): X\otimes Y \rightarrow 1_{(R,f)}=\Delta_{f}$ \\
is a morphism of matrix factorizations if the following diagram commutes:\\
$\xymatrix{(X\otimes Y)^{0} \ar[r]^{{d^{0}_{X\otimes Y}}} \ar[d]_{\eta^{0}} &
(X\otimes Y)^{1} \ar[d]^{\eta^{1}} \ar[r]^{{d^{1}_{X\otimes Y}}} & (X\otimes Y)^{0}\ar[d]^{\eta^{0}}\\
(\Delta_{f})^{0} \ar[r]^{{d^{0}_{\Delta_{f}}}} & (\Delta_{f})^{1}\ar[r]^{{d^{1}_{\Delta_{f}}}} & (\Delta_{f})^{0}}$.\\

That is: $$\begin{cases}
\eta^{0}d^{1}_{X\otimes Y} = d^{1}_{\Delta_{f}}\eta^{1}\\
\eta^{1}d^{0}_{X\otimes Y} = d^{0}_{\Delta_{f}}\eta^{0}
\end{cases}$$


In matrix form: $$\sharp\begin{cases}
\eta^{0}Q = T\eta^{1}...i)\\
\eta^{1}P = R\eta^{0}...ii)
\end{cases}$$
where for ease of notation we wrote $\eta^{j}$ for the matrix of $\eta^{j}$, $j=0,1$.\\
Now, $X$ (respectively $Y$) being a matrix factorization of $g(y)-f(x)$ (respectively $f(x)-g(y)$) we obtain from the Yoshino's tensor product of matrix factorizations (cf. definition 1.2 of \cite{yoshino1998tensor}) that, $X\otimes Y$ is a factorization of $(g(y)-f(x))+(f(x)-g(y))=0$. This means that $PQ=0$.

from $ii)$, since $R$ is invertible, we have:
\begin{align*}
\eta^{1}P = R\eta^{0} &\Rightarrow R^{-1}\eta^{1}P =\eta^{0}  \cdots \bigstar \\
&\Rightarrow R^{-1}\eta^{1}PQ =\eta^{0}Q\\
&\Rightarrow \eta^{0}Q = 0 \cdots \bigstar \bigstar
\end{align*}

Putting this in $i)$, since $T$ is invertible, we obtain:
\begin{align*}
T\eta^{1}= \eta^{0}Q = 0 &\Rightarrow \eta^{1}= T^{-1}0= 0\\
&\Rightarrow \eta^{1}= 0
\end{align*}

We would now like to give a necessary condition on $\rho$. The process to obtain this necessary condition is completely analogous to what we did in the case of $\eta$. But we will present it here for the sake of clarity. \\

Let $(P',Q')$ and $(R',T')$ be pair of matrices representing respectively the matrix factorizations $Y\otimes_{S} X$ and $\Delta_{g}$. We assume $g$ is not a constant polynomial, thus $\Delta_{g}$ is not the zero polynomial and so, by corollaries \ref{matrix facto is invertible 1} and \ref{matrix facto is invertible 2}, we have that $R'$ and $T'$ are invertible.\\
 We know that \\
$\rho =(\rho^{0}, \rho^{1}): Y\otimes X \rightarrow 1_{(S,g)}=\Delta_{g}$ \\
is a morphism of matrix factorizations if the following diagram commutes:\\
$\xymatrix{(Y\otimes X)^{0} \ar[r]^{{d^{0}_{Y\otimes X}}} \ar[d]_{\rho^{0}} &
(Y\otimes X)^{1} \ar[d]^{\rho^{1}} \ar[r]^{{d^{1}_{Y\otimes X}}} & (Y\otimes X)^{0}\ar[d]^{\rho^{0}}\\
(\Delta_{g})^{0} \ar[r]^{{d^{0}_{\Delta_{g}}}} & (\Delta_{g})^{1}\ar[r]^{{d^{1}_{\Delta_{g}}}} & (\Delta_{g})^{0}}$\\

That is: $$\begin{cases}
\rho^{0}d^{1}_{Y\otimes X} = d^{1}_{\Delta_{g}}\rho^{1}\\
\rho^{1}d^{0}_{Y\otimes X} = d^{0}_{\Delta_{g}}\rho^{0}
\end{cases}$$


In matrix form: $$\dag\begin{cases}
\rho^{0}Q' = T'\rho^{1}...i)'\\
\rho^{1}P' = R'\rho^{0}...ii)'
\end{cases}$$
where for ease of notation we wrote $\rho^{i}$ for the matrix of $\rho^{i}$, $i=0,1$.\\
Now, $X$ (respectively $Y$) being a matrix factorization of $g(y)-f(x)$ (respectively $f(x)-g(y)$) we obtain from the Yoshino's tensor product of matrix factorization (cf. definition 1.2 of \cite{yoshino1998tensor}) that $Y\otimes X$ is a factorization of $(f(x)-g(y))+(g(y)-f(x))=0$. This means that $P'Q'=0$.

from $ii)'$, since $R'$ is invertible, we have:
\begin{align*}
\rho^{1}P' = R'\rho^{0} &\Rightarrow R'^{-1}\rho^{1}P' =\rho^{0}\\
&\Rightarrow R'^{-1}\rho^{1}P'Q' =\rho^{0}Q'\\
&\Rightarrow \rho^{0}Q' = 0
\end{align*}

Putting this in $i)'$, since $T'$ is invertible, we get:
\begin{align*}
T'\rho^{1}= \rho^{0}Q' = 0 &\Rightarrow \rho^{1}= T'^{-1}0= 0\\
&\Rightarrow \rho^{1}= 0
\end{align*}
%
We now gather our results in the following theorem.
\begin{theorem} \label{neces cond on rho and eta for M.C.in LG}
Let
\begin{itemize}
\item $(R,f)$ and $(S,g)$ be two objects of $\mathcal{LG}_{K}$.
\item $X\in \mathcal{LG}_{K}((R,f),(S,g))= hmf(R\otimes_{K} S, g-f)$ i.e., $X:(R,f) \rightarrow (S,g)$ is a finite rank matrix factorization of $g-f$.
\item $Y\in \mathcal{LG}_{K}((S,g),(R,f))= hmf(S\otimes_{K} R, f-g)$ i.e., $Y:(S,g) \rightarrow (R,f)$ is a finite rank matrix factorization of $f-g$. \\
    such that $X\otimes Y$ and $Y\otimes X$ are finite rank matrix factorizations.
\item $\Delta_{f}\in \mathcal{LG}_{K}((R,f),(R,f))= hmf(R\otimes_{K} R, f\otimes id - id\otimes f)$ i.e., $\Delta_{f}:(R,f) \rightarrow (R,f)$ is a finite rank matrix factorization of $f\otimes id-id\otimes f$.

\item $\Delta_{g}\in \mathcal{LG}_{K}((S,g),(S,g))= hmf(R\otimes_{K} S, g\otimes id - id\otimes g)$ i.e., $\Delta_{g}:(S,g) \rightarrow (S,g)$ is a finite rank matrix factorization of $g\otimes id-id\otimes g$.
\item $\eta: X\otimes_{R} Y \rightarrow 1_{(R,f)}=\Delta_{f}$ and let $(P,Q)$ and $(R,T)$ be pairs of matrices representing respectively the finite rank matrix factorizations $X\otimes Y$ and $\Delta_{f}$.
\item $\rho: Y\otimes_{S} X \rightarrow 1_{(S,g)}=\Delta_{g}$ and let $(P',Q')$ and $(R',T')$ be pairs of matrices representing respectively the finite rank matrix factorizations $Y\otimes X$ and $\Delta_{g}$.
 \end{itemize}
 Then:
 A necessary condition for $\Gamma = (X,Y,\eta,\rho)$ to be a \textit{Morita Context} is $$\begin{cases}
    \eta^{1}=0,\;and\; \eta^{0}Q=0,\\
    \rho^{1}=0,\;and\; \rho^{0}Q'=0.
   \end{cases} $$
\end{theorem}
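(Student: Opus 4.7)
The plan is to unpack the statement that $\eta$ and $\rho$ are morphisms of matrix factorizations into matrix equations, then exploit two structural facts already established: (i) $X \otimes Y$ and $Y \otimes X$ are matrix factorizations of the zero polynomial, and (ii) the matrices appearing in $\Delta_f$ and $\Delta_g$ are invertible over the appropriate field of fractions. The Morita-context coherence axioms will not actually be needed; only the bare fact that $\eta$ and $\rho$ are $2$-cells, i.e.\ morphisms of matrix factorizations, is required—which is exactly what makes the derived conditions \emph{necessary}.

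First I would translate $\eta : X \otimes Y \to \Delta_f$ being a morphism of linear factorizations (Definition \ref{morphism of matrix facto}) into the commutative diagram with rows $(d^0_{X\otimes Y}, d^1_{X\otimes Y})$ and $(d^0_{\Delta_f}, d^1_{\Delta_f})$, and read off the two matrix identities $\eta^0 Q = T \eta^1$ and $\eta^1 P = R \eta^0$ in the chosen representing pairs $(P,Q)$ for $X \otimes Y$ and $(R,T)$ for $\Delta_f$. Next, by Yoshino's construction (Definition \ref{defn Yoshino tensor prodt}) applied to matrix factorizations of $g-f$ and $f-g$, the tensor product $X \otimes Y$ is a matrix factorization of $0$, so $PQ = 0$.

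Then, assuming $f$ is non-constant so that $\Delta_f$ factors the nonzero polynomial $f\otimes \mathrm{id} - \mathrm{id}\otimes f$, Corollaries \ref{matrix facto is invertible 1} and \ref{matrix facto is invertible 2} guarantee that $R$ and $T$ are invertible over the relevant field of fractions. The calculation is then short: from $\eta^1 P = R \eta^0$ I would invert $R$ to obtain $\eta^0 = R^{-1} \eta^1 P$; right-multiplying by $Q$ and using $PQ = 0$ yields $\eta^0 Q = 0$; substituting back into $\eta^0 Q = T \eta^1$ gives $T \eta^1 = 0$, and invertibility of $T$ forces $\eta^1 = 0$. The identical argument with $\rho$, $(P',Q')$, $(R',T')$, and the assumption that $g$ is non-constant produces $\rho^0 Q' = 0$ and $\rho^1 = 0$.

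The only real obstacle is the invertibility step: one must emphasize that invertibility is taken over the field of fractions rather than over $R \otimes S$ itself, and that the non-constancy hypotheses on $f$ and $g$ are what make $\Delta_f$ and $\Delta_g$ matrix factorizations of \emph{nonzero} polynomials so that Corollaries \ref{matrix facto is invertible 1}--\ref{matrix facto is invertible 2} apply. Once these are in place, the argument is essentially formal linear algebra, and the symmetry between the $\eta$ and $\rho$ halves means the second case needs only to be sketched after the first has been written out.
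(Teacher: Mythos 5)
Your proposal is correct and follows essentially the same route as the paper: translate the $2$-cell conditions on $\eta$ and $\rho$ into the matrix identities $\eta^{0}Q=T\eta^{1}$, $\eta^{1}P=R\eta^{0}$ (and their primed analogues), observe that $PQ=0$ because $X\otimes Y$ factors the zero polynomial, and then use invertibility of $R$ and $T$ over the field of fractions (valid since $f$, $g$ are non-constant) to conclude $\eta^{0}Q=0$ and $\eta^{1}=0$. Your observation that only the morphism-of-factorizations property of $\eta$ and $\rho$ is used, not the coherence diagrams, matches the paper's argument exactly.
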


We will now prove that these necessary conditions are not sufficient thanks to an auxiliary lemma (lemma \ref{auxiliary lemma}) which is proved using a celebrated result (Fact \ref{det block matrices}) on determinant of block matrices due to Schur (e.g \cite{puntanen2005historical}, \cite{silvester2000determinants} (theorem 3)).

\begin{fact} \label{det block matrices}
If $A,B,C$ and $D$ are $n\times n$ matrices, and $$M=\begin{bmatrix}
A & B  \\
C & D
\end{bmatrix},$$
Then the following hold: \begin{enumerate}
\item $det(M)= det(AD-CB)$ if $AC=CA$
\item $det(M)= det(AD-BC)$ if $CD=DC$
\item $det(M)= det(DA-BC)$ if $BD=DB$
\item $det(M)= det(DA-CB)$ if $AB=BA$
\end{enumerate}
\end{fact}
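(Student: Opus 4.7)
The plan is to reduce all four identities to the classical Schur complement formula by using the commutation hypothesis in each case to eliminate the inverse that appears in the Schur complement. Recall that when $A$ is invertible, the block LDU factorization
$$M = \begin{bmatrix} I & 0 \\ CA^{-1} & I \end{bmatrix} \begin{bmatrix} A & 0 \\ 0 & D - CA^{-1}B \end{bmatrix} \begin{bmatrix} I & A^{-1}B \\ 0 & I \end{bmatrix}$$
yields $\det(M) = \det(A)\det(D - CA^{-1}B)$, and an analogous factorization based on $D$ gives $\det(M) = \det(D)\det(A - BD^{-1}C)$ whenever $D$ is invertible. The idea is to absorb the extra factor back into the Schur complement (either from the left or from the right) and then invoke the commutation relation to get rid of the inverse.

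For case (1), where $AC = CA$, I would multiply the prefactor $A$ inside the second determinant: $\det(A)\det(D - CA^{-1}B) = \det(AD - ACA^{-1}B)$, and $AC = CA$ gives $ACA^{-1} = C$, landing at $\det(AD - CB)$. Case (4) is the mirror image: with $AB = BA$, absorb $A$ on the right to obtain $\det((D - CA^{-1}B)A) = \det(DA - CA^{-1}BA) = \det(DA - CB)$, using $A^{-1}BA = B$. Cases (2) and (3) follow the same template with the $D$-Schur complement: for $CD = DC$, absorb $D$ on the right to get $\det((A - BD^{-1}C)D) = \det(AD - BD^{-1}CD) = \det(AD - BC)$; for $BD = DB$, absorb $D$ on the left to get $\det(D(A - BD^{-1}C)) = \det(DA - DBD^{-1}C) = \det(DA - BC)$.

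The remaining difficulty, and the only genuine obstacle, is that the Schur complement identity requires the relevant diagonal block to be invertible, which is not part of the hypothesis. The standard remedy is a generic perturbation: adjoin a formal indeterminate $t$ and replace $A$ by $A + tI$ (or $D$ by $D + tI$ for cases (2) and (3)). The commutation is preserved, since $(A + tI)C = AC + tC = CA + tC = C(A + tI)$, and $A + tI$ is invertible after inverting $\det(A + tI)$, a polynomial in $t$ with leading term $t^n$ and hence not a zero-divisor. The derived identity then holds in the localized ring, but both sides are polynomial in $t$ with coefficients in the original ring, so equality as rational functions forces equality as polynomials, and specializing to $t = 0$ recovers the desired identity. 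Once this perturbation step is set up, each of the four cases is a three-line calculation; the substance of the proof lies entirely in the interplay between the Schur decomposition and the commutation hypothesis.
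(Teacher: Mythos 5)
The paper offers no proof of this Fact at all: it is recorded as a known result attributed to Schur, with the reader pointed to the cited sources (Silvester's Theorem~3 in particular), so there is no internal argument to compare yours against. Your proof is correct and complete --- each of the four reductions $ACA^{-1}=C$, $A^{-1}BA=B$, $D^{-1}CD=C$, $DBD^{-1}=B$ is the right use of the corresponding commutation hypothesis, and the generic-perturbation step is handled properly: $\det(A+tI)$ is monic in $t$, hence a non-zero-divisor by the leading-coefficient argument, so $R[t]$ injects into its localization and the identity descends to $R[t]$ and then to $t=0$. The only comparison worth making is with the standard proof in the cited source, which avoids the Schur complement entirely: for the $CD=DC$ case one right-multiplies $M$ by $\begin{bmatrix} D & 0 \\ -C & I\end{bmatrix}$ to obtain $\det(M)\det(D)=\det(AD-BC)\det(D)$ without inverting anything, and then cancels $\det(D)$ by exactly the indeterminate trick you use (replace $D$ by $D+tI$, cancel the monic non-zero-divisor, set $t=0$); the other three cases follow from the analogous one-sided multiplications. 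That route is marginally more economical because it never leaves the polynomial ring $R[t]$, whereas yours must first pass to the localization at $\det(A+tI)$ before collapsing the conjugate; but the commutation hypothesis does the same work in the same place in both arguments, and your version is a perfectly sound substitute for the missing proof.
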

The following auxiliary lemma states that the determinants of the matrices in the matrix factorizations $X\otimes X'$ and $X'\otimes X$ are all equal.
\begin{lemma} \label{auxiliary lemma}
Let $X=(\phi,\psi)$ be an $n\times n$ matrix factorization of $f\in R$ and $X'=(\phi',\psi')$ be an $m\times m$ matrix factorizations of $g\in S$ where $\phi$ and $\psi$ (respectively $\phi'$ and $\psi'$) are matrices over $K[x]$ (respectively $K[y]$). These matrices can be considered as matrices over $L=K[x,y]$ and let\\
\(X\widehat{\otimes} X'=(P,Q)=
(\begin{bmatrix}
    \phi\otimes 1_{m}  &  1_{n}\otimes \phi'      \\
   -1_{n}\otimes \psi'  &  \psi\otimes 1_{m}
\end{bmatrix}
,
\begin{bmatrix}
    \psi\otimes 1_{m}  &  -1_{n}\otimes \phi'      \\
    1_{n}\otimes \psi'  &  \phi\otimes 1_{m}
\end{bmatrix})\)
and \\\\ \(X'\widehat{\otimes} X= (P',Q')=
(\begin{bmatrix}
    \phi'\otimes 1_{n}  &  1_{m}\otimes \phi      \\
   -1_{m}\otimes \psi  &  \psi'\otimes 1_{n}
\end{bmatrix}
,
\begin{bmatrix}
    \psi'\otimes 1_{n}  &  -1_{m}\otimes \phi      \\
    1_{m}\otimes \psi  &  \phi'\otimes 1_{n}
\end{bmatrix})
\)\\

where each component is an endomorphism on $L^{n}\otimes L^{m}$.\\
Then $$\begin{cases}
det(P)=det(P'),\\
det(Q)= det(Q')
\end{cases}$$
Furthermore, all the four determinants are equal.
\end{lemma}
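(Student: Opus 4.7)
The plan is to apply Schur's formula (Fact \ref{det block matrices}) directly to each of the four block matrices $P$, $Q$, $P'$, $Q'$, and observe that in every case the determinant reduces to the determinant of $(f+g)I_{nm}$, hence all four are equal.

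First I would verify that the commutativity hypothesis of Schur's formula holds for each of the four block matrices. The key identity I would exploit is that elementary tensors of the form $A\otimes 1$ and $1\otimes B$ always commute: $(A\otimes 1)(1\otimes B) = A\otimes B = (1\otimes B)(A\otimes 1)$. For instance, in the matrix $P$, writing the blocks as $A = \phi\otimes 1_m$, $B = 1_n\otimes \phi'$, $C = -1_n\otimes \psi'$, $D = \psi\otimes 1_m$, both $AC$ and $CA$ equal $-\phi\otimes\psi'$. So part (1) of Fact \ref{det block matrices} applies, and I would run through each of $Q$, $P'$, $Q'$ in turn to check that in every case either the left column blocks or the top row blocks commute, so that one of the four clauses of Schur's formula is available.

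Next, applying Schur's formula to $P$ gives
\begin{equation*}
\det(P) = \det(AD - CB) = \det\bigl((\phi\otimes 1_m)(\psi\otimes 1_m) - (-1_n\otimes \psi')(1_n\otimes \phi')\bigr) = \det\bigl(\phi\psi\otimes 1_m + 1_n\otimes \psi'\phi'\bigr).
\end{equation*}
Since $(\phi,\psi)$ is a matrix factorization of $f$ and $(\phi',\psi')$ is a matrix factorization of $g$, Proposition \ref{matrix facto commute} gives $\phi\psi = \psi\phi = fI_n$ and $\phi'\psi' = \psi'\phi' = gI_m$. Therefore
\begin{equation*}
\det(P) = \det\bigl(fI_n\otimes I_m + I_n\otimes gI_m\bigr) = \det\bigl((f+g)I_{nm}\bigr) = (f+g)^{nm}.
\end{equation*}
I would carry out the analogous computation for $Q$, $P'$, $Q'$; in each case the resulting expression inside the determinant is $\phi\psi\otimes 1 + 1\otimes \psi'\phi'$ or $\psi\phi\otimes 1 + 1\otimes \phi'\psi'$, both of which equal $(f+g)I_{nm}$ by the same proposition. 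The symmetry in $n,m$ makes the computations for $P'$ and $Q'$ exact mirror images of those for $P$ and $Q$.

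I do not anticipate a serious obstacle: the argument is essentially bookkeeping. The only delicate point is making sure that, in each of the four cases, I identify a pair of blocks that commute so that the appropriate clause of Schur's formula is available; this is handled uniformly by the $(A\otimes 1)(1\otimes B) = (1\otimes B)(A\otimes 1)$ observation. The conclusion $\det(P) = \det(Q) = \det(P') = \det(Q') = (f+g)^{nm}$ then follows immediately.
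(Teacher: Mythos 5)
Your proposal is correct and follows essentially the same route as the paper: both verify the commutativity hypothesis of Schur's block-determinant formula via the mixed-product identity for tensors and reduce each of the four determinants to $\det\bigl((f+g)I_{nm}\bigr)=(f+g)^{nm}$. The only (immaterial) difference is that you invoke clause (1) of Fact \ref{det block matrices} using $AC=CA$, whereas the paper invokes clause (2) using $CD=DC$; both hinge on the same observation and on Proposition \ref{matrix facto commute} to replace $\psi\phi$ or $\psi'\phi'$ by $fI_n$ or $gI_m$.
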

\begin{proof}
We will make use of fact \ref{det block matrices}(2), to compute the determinants of $P$, $P'$, $Q$ and $Q'$ which are block matrices.
Looking at $P$, in order to apply fact \ref{det block matrices}(2), we first have to observe that $(-1_{n}\otimes \psi' )( \psi\otimes 1_{m})=-1_{n}\psi \otimes \psi' 1_{m}=-\psi\otimes \psi'=\psi (-1_{n})\otimes 1_{m}\psi' =(\psi\otimes 1_{m})(-1_{n}\otimes \psi' )$ and looking at $P'$ we equally observe that
$(-1_{m}\otimes \psi )(\psi'\otimes 1_{n})=-1_{m}\psi'\otimes \psi 1_{n}=-\psi'\otimes \psi=\psi' (-1_{m})\otimes 1_{n}\psi =(\psi'\otimes 1_{n})(-1_{m}\otimes \psi )$, we can compute the determinants of $P$ and $P'$ as follows:\\
\begin{align*}
det(P)&= det[ (\phi\otimes 1_{m})(\psi\otimes 1_{m})- (1_{n}\otimes \phi')(-1_{n}\otimes \psi')]\\
&=det[\phi\psi\otimes 1_{m} + 1_{n}\otimes \phi'\psi']\\
&=det[f1_{n}\otimes 1_{m} + 1_{n}\otimes g1_{m}]\,since\,\phi\psi=f1_{n}\,and\,\phi'\psi'=g1_{m}\\
&=det[f(1_{n}\otimes 1_{m}) + g(1_{n}\otimes 1_{m})]\,by\,properties\,of\,\otimes\\
&=det[(f+g)1_{nm}]\,since\;1_{n}\otimes 1_{m}=1_{nm}\\
&=(f+g)^{nm}det(1_{nm})\;by\,properties\;of\;determinants\\
&=(f+g)^{nm}
\end{align*}
\begin{align*}
det(P')&= det[ (\phi'\otimes 1_{n})(\psi'\otimes 1_{n})- (1_{m}\otimes \phi)(-1_{m}\otimes \psi)]\\
&=det[\phi'\psi'\otimes 1_{n} + 1_{m}\otimes \phi\psi]\\
&=det[g1_{m}\otimes 1_{n} + 1_{m}\otimes f1_{n}]\,since\,\phi\psi=f1_{n}\,and\,\phi'\psi'=g1_{m}\\
&=det[g(1_{m}\otimes 1_{n}) + f(1_{m}\otimes 1_{n})]\\
&=det[(g+f)1_{mn}]\,since\;1_{m}\otimes 1_{n}=1_{mn}\\
&=(g+f)^{mn}det(1_{mn})\;by\,properties\;of\;determinants\\
&=(f+g)^{mn}\;by\;commutativity\;in\;K[x,y]\\
&=det(P)\,as\;desired.
\end{align*}
Likewise, in order to use fact \ref{det block matrices}(2) to compute $det(Q)$, from $Q$, we observe that $(1_{n}\otimes \psi' )( \phi\otimes 1_{m})=1_{n}\phi \otimes \psi' 1_{m}=\phi\otimes \psi'=\phi 1_{n}\otimes 1_{m}\psi' =(\phi\otimes 1_{m})(1_{n}\otimes \psi' )$ and looking at $Q'$ we equally observe that
$(1_{m}\otimes \psi )(\phi'\otimes 1_{n})=1_{m}\phi'\otimes \psi 1_{n}=\phi'\otimes \psi=\phi' 1_{m}\otimes 1_{n}\psi =(\phi'\otimes 1_{n})(1_{m}\otimes \psi )$ we can compute the determinants of $Q$ and $Q'$ as follows:\\
\begin{align*}
det(Q)&= det[ (\psi\otimes 1_{m})(\phi\otimes 1_{m})- (-1_{n}\otimes \phi')(1_{n}\otimes \psi')]\\
&=det[\psi\phi\otimes 1_{m} + 1_{n}\otimes \phi'\psi']\\
&=det[f1_{n}\otimes 1_{m} + 1_{n}\otimes g1_{m}]\,since\,\psi\phi=f1_{n}\,and\,\phi'\psi'=g1_{m}\\
&=det[f(1_{n}\otimes 1_{m}) + g(1_{n}\otimes 1_{m})]\,by\,properties\,of\,\otimes\\
&=det[(f+g)1_{nm}]\,since\;1_{n}\otimes 1_{m}=1_{nm}\\
&=(f+g)^{nm}det(1_{nm})\;by\,properties\;of\;determinants\\
&=(f+g)^{nm}
\end{align*}
\begin{align*}
det(Q')&= det[ (\psi'\otimes 1_{n})(\phi'\otimes 1_{n})- (-1_{m}\otimes \phi)(1_{m}\otimes \psi)]\\
&=det[\psi'\phi'\otimes 1_{n} + 1_{m}\otimes \phi\psi]\\
&=det[g1_{m}\otimes 1_{n} + 1_{m}\otimes f1_{n}]\,since\,\psi\phi=f1_{n}\,and\,\phi'\psi'=g1_{m}\\
&=det[g(1_{m}\otimes 1_{n}) + f(1_{m}\otimes 1_{n})]\\
&=det[(g+f)1_{mn}]\,since\;1_{m}\otimes 1_{n}=1_{mn}\\
&=(g+f)^{mn}det(1_{mn})\;by\,properties\;of\;determinants\\
&=(f+g)^{mn}\;by\;commutativity\;in\;K[x,y]\\
&=det(Q)\,as\;desired.
\end{align*}

Clearly, all the four determinants are equal.
\end{proof}

\begin{remark}
It is good to observe that in the case of a Morita Context in $\mathcal{LG}_{K}$, the $f$ and the $g$ we have in the above auxiliary lemma are actually additive inverses of each other by definition of 1-morphisms in $\mathcal{LG}_{K}$. In fact, if $X$ is a morphism from the polynomial $f_{1}$ to the polynomial $f_{2}$, then $X$ is a matrix factorization of $f_{2}-f_{1}=f$. And if $Y$ is a morphism from the polynomial $f_{2}$ to the polynomial $f_{1}$, then $Y$ is a matrix factorization of $f_{1}-f_{2}=g$. We see that $f=-g$.
\end{remark}
Thus, we have the following result which is actually a consequence of lemma \ref{auxiliary lemma}.
\begin{theorem}\label{determinants in MCLGk are all 0}
 Let $(R,f)$ and $(S,g)$ be two objects in $\mathcal{LG}_{K}$ and let $X:(R,f) \rightarrow (S,g)$ and $X': (S,g)\rightarrow (R,f)$ be 1-morphisms in $\mathcal{LG}_{K}$. If $X\otimes X'=(P,Q)$ and $X'\otimes X=(P',Q')$,
 then $det(P)=det(Q)=det(P')=det(Q')=0$
\end{theorem}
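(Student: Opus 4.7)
The plan is to derive Theorem~\ref{determinants in MCLGk are all 0} as an immediate corollary of Lemma~\ref{auxiliary lemma}, which has already done all of the determinantal work. The only additional ingredient needed is the observation recorded in the remark immediately preceding the theorem: in $\mathcal{LG}_K$ the two polynomials that $X$ and $X'$ factor are necessarily additive inverses of each other.

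More precisely, first I would unpack the hypotheses. Since $X:(R,f)\to(S,g)$ is a $1$-morphism of $\mathcal{LG}_K$, it is a matrix factorization of $F:=g-f$, of some size $n$; and since $X':(S,g)\to(R,f)$ is a $1$-morphism in the opposite direction, it is a matrix factorization of $G:=f-g$, of some size $m$. These are exactly the data required to feed into Lemma~\ref{auxiliary lemma} (with $F$ playing the role of $f$ and $G$ playing the role of $g$), and by construction they satisfy the key identity
\begin{equation*}
F+G=(g-f)+(f-g)=0.
\end{equation*}

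Next, I would invoke Lemma~\ref{auxiliary lemma} directly: the four matrices $P,Q,P',Q'$ appearing in the Yoshino tensor products $X\widehat{\otimes}X'=(P,Q)$ and $X'\widehat{\otimes}X=(P',Q')$ all have determinant equal to $(F+G)^{nm}$. Substituting $F+G=0$ from the previous paragraph gives
\begin{equation*}
\det(P)=\det(Q)=\det(P')=\det(Q')=0^{nm}=0,
\end{equation*}
which is the desired conclusion.

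There is essentially no obstacle to this argument; the theorem is a one-line corollary once the auxiliary lemma and the preceding remark are in place. The only subtlety worth flagging explicitly is that the exponent $nm$ rather than $2nm$ in the formula $(F+G)^{nm}$ arises because Schur's fact (Fact~\ref{det block matrices}(2)) collapses the $2nm\times 2nm$ block matrices $P, Q, P', Q'$ to a scalar multiple of the $nm\times nm$ identity before the determinant is extracted; this has already been justified inside the proof of Lemma~\ref{auxiliary lemma} and requires no further comment here.
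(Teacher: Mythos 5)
Your proposal is correct and follows exactly the paper's own route: the paper also deduces the theorem immediately from Lemma \ref{auxiliary lemma} together with the preceding remark that $X$ and $X'$ factor the polynomials $g-f$ and $f-g$, whose sum is zero, so that all four determinants $(F+G)^{nm}$ vanish. Nothing further is needed.
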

\begin{proof}
The proof follows immediately from the remark and lemma \ref{auxiliary lemma}.
\end{proof}
It follows from this theorem that $Q$ in theorem \ref{neces cond on rho and eta for M.C.in LG} is not invertible and so, the equality $\eta^{0}Q=0$ will never have a unique solution for $\eta^{0}$. There would be several solutions from this equality, among which the one(s) that will be sufficient to obtain a \textit{Morita context}.\\
\begin{remark} \label{remark: nec cond not suff}
The fact that $Q$ is not invertible helps to see that
the necessary condition given for $\eta$ in theorem \ref{neces cond on rho and eta for M.C.in LG} is not sufficient. In fact, in the discussion that precedes the statement of that theorem, we cannot reverse the direction of the implication symbol from $\bigstar\bigstar$ to $\bigstar$. Indeed, suppose $\eta^{0}Q=0$ then since $PQ=0$, we have $\eta^{0}Q=R^{-1}\eta^{1}PQ$ implying $R\eta^{0}Q=\eta^{1}PQ$ which implies $(R\eta^{0}-\eta^{1}P)Q=0$. Now, $Q$ being noninvertible, we cannot obtain from here that $(R\eta^{0}-\eta^{1}P)=0$ which is $\bigstar$. So, that necessary condition is not a sufficient one.\\
Similarly, since it also follows from theorem \ref{determinants in MCLGk are all 0} that $Q'$ in theorem \ref{neces cond on rho and eta for M.C.in LG} is not invertible, the necessary condition given for $\rho$ in theorem \ref{neces cond on rho and eta for M.C.in LG} is not sufficient.
\end{remark}

Though the necessary conditions we found are not sufficient, there is a trivial sufficient condition. In fact,
it is easy to see that two equal morphisms of linear factorizations are homotopic; since it would suffice to take $\lambda = 0$ in definition \ref{homotopic lin facto}. We immediately have the following remark which gives a (trivial) sufficient condition to obtain a \textit{Morita context} in $\mathcal{LG}_{K}$.

\begin{remark} \label{suff cond for MC in LG-k}
Let $(R,f)$ and $(S,g)$ be two objects of $\mathcal{LG}_{K}$. Let $X:(R,f) \rightarrow (S,g)$ be a matrix factorization of $g-f$ and $Y:(S,g) \rightarrow (R,f)$ a matrix factorization of $f-g$.
Then
$\Gamma = (X,Y,0,0)$ is a \textit{Morita Context} in $\mathcal{LG}_{K}$. That is, provided we have $X$ and $Y$, it suffices to take $\eta=0$ and $\rho=0$ in definition \ref{defn morita context} to obtain a \textit{Morita Context} in $\mathcal{LG}_{K}$ .
\end{remark}
This follows from the fact that the maps $r$ and $l$ are morphisms of matrix factorizations and so, they are linear. Consequently, the image of zero under these morphisms is zero.\\
In fact:\\
 If $\eta=0$ and $\rho=0$, then:
\begin{enumerate}
\item $\psi=r_{Y}\circ 1_{Y}\otimes\eta \circ a =0$ and
$\phi=l_{Y}\circ \rho\otimes 1_{Y} = 0$
\item $\psi'=r_{X}\circ 1_{X}\otimes\rho \circ a= 0$, and $\phi'=l_{X}\circ \eta \otimes 1_{X}= 0$
\end{enumerate}
Hence, the $M.C.LG_{K}$ diagrams commute up to homotopy if:\\
$\exists \lambda: Z \rightarrow Y$ s.t. $d_{Y}\lambda + \lambda d_{Z}= \psi - \phi= 0$, where $Z=(Y\otimes_{S} X)\otimes_{R} Y$ and $\exists \xi: Z' \rightarrow X$ s.t. $d_{X}\xi + \xi d_{Z'}= \psi' - \phi'= 0$, where $Z'=(X\otimes_{R} Y)\otimes_{S} X$

Hence, it now suffices to choose $\lambda = 0$ and $\xi =0$ to see that $\Gamma = (X,Y,0,0)$ is a \textit{Morita Context} in $\mathcal{LG}_{K}$.\\
A straightforward consequence of theorem \ref{neces cond on rho and eta for M.C.in LG} and remark \ref{suff cond for MC in LG-k} is the following:
\begin{corollary}
A necessary and sufficient condition on $\eta^{1}$ and $\rho^{1}$ for $\Gamma = (X,Y,\eta,\rho)$ to be a \textit{Morita Context} is $\eta^{1}=0=\rho^{1}$.
\end{corollary}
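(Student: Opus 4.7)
The plan is to prove the corollary by extracting each direction from results already established in the paper. The statement packages two implications: necessity of $\eta^{1}=0=\rho^{1}$ for $\Gamma=(X,Y,\eta,\rho)$ to be a Morita context, and sufficiency, understood in the sense that whenever we have $1$-morphisms $X,Y$ as specified, we can always complete to a Morita context provided the only constraint placed on the differentials is $\eta^{1}=0=\rho^{1}$.

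For necessity, I would invoke Theorem \ref{neces cond on rho and eta for M.C.in LG} directly: if $\Gamma=(X,Y,\eta,\rho)$ is a Morita context, then the matrix equations $\sharp$ and $\dag$ derived in the preceding discussion, together with invertibility (over the appropriate fraction fields) of the matrices $R,T,R',T'$ coming from $\Delta_{f}$ and $\Delta_{g}$ via Corollaries \ref{matrix facto is invertible 1} and \ref{matrix facto is invertible 2}, force $\eta^{1}=0$ and $\rho^{1}=0$. No further argument is required here, since the chain of implications has already been carried out in the derivation that precedes Theorem \ref{neces cond on rho and eta for M.C.in LG}.

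For sufficiency, I would appeal to Remark \ref{suff cond for MC in LG-k}: given $X$ and $Y$, the four-tuple $\Gamma=(X,Y,0,0)$ is always a Morita context, because both legs of the $M.C.LG_{K}$ diagrams become identically zero and the null homotopies $\lambda=0$, $\xi=0$ witness commutativity up to homotopy. In particular, the pair $(\eta^{0},\eta^{1})=(0,0)$ and $(\rho^{0},\rho^{1})=(0,0)$ realizes the constraint $\eta^{1}=0=\rho^{1}$, so this constraint on the odd components alone is always compatible with being a Morita context. Combining the two directions yields the corollary.

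The main (and essentially only) conceptual point is to read the statement correctly, since there is no genuine computational obstacle: both directions are already done in the body of the paper, and the corollary merely juxtaposes Theorem \ref{neces cond on rho and eta for M.C.in LG} (which handles $\Rightarrow$) with Remark \ref{suff cond for MC in LG-k} (which handles $\Leftarrow$ by exhibiting the trivial Morita context $(X,Y,0,0)$). The only subtlety worth flagging in the write-up is that the sufficiency direction does not say that every $\eta,\rho$ with $\eta^{1}=0=\rho^{1}$ give a Morita context—indeed Remark \ref{remark: nec cond not suff} warns that $\eta^{0}Q=0$ is not sufficient—but rather that the zero choice of the odd parts can always be completed to a Morita context.
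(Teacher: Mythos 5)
Your proof is correct and follows exactly the paper's own route: the paper presents this corollary as ``a straightforward consequence'' of Theorem \ref{neces cond on rho and eta for M.C.in LG} (necessity) and Remark \ref{suff cond for MC in LG-k} (sufficiency via the trivial Morita context $(X,Y,0,0)$), which is precisely your two-step argument. Your added caveat about how the sufficiency direction must be read --- as realizability of the condition on the odd components rather than as every such $(\eta,\rho)$ yielding a Morita context --- is a sensible clarification consistent with Remark \ref{remark: nec cond not suff}.
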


\begin{example}\label{exple MC for suff cond}
Consider $h=-x^{2}+1\in \mathbb{R}[x]=R$ and $g=y^{2}+1\in \mathbb{R}[y]=S$. A \textit{Morita Context} between $(R,h)$ and $(S,g)$ is a quadruple $\Gamma=(X,X',\eta,\rho)$ where:
\begin{itemize}
\item $X:(R,h)\rightarrow (S,g)$ is a matrix factorization of $g-h=y^{2}+x^{2}\in \mathbb{R}[x,y]$. We take \\
\(
X=(\begin{bmatrix}
    x  &  -y      \\
    y  &  x
\end{bmatrix}
,
\begin{bmatrix}
    x  &  y      \\
    -y  &  x
\end{bmatrix})
\)
since \(\begin{bmatrix}
    x  &  -y      \\
    y  &  x
\end{bmatrix}
\begin{bmatrix}
    x  &  y      \\
    -y  &  x
\end{bmatrix}=(g-h)\cdot I_{2}
\)
\item $X':(S,g)\rightarrow(R,h)$ is a matrix factorization of $h-g=-y^{2}-x^{2}\in \mathbb{R}[x,y]$. We take \\
\(
X'=(\begin{bmatrix}
    -x  &  y      \\
    -y  &  -x
\end{bmatrix}
,
\begin{bmatrix}
    x  &  y      \\
    -y  &  x
\end{bmatrix})
\)
since \(\begin{bmatrix}
    -x  &  y      \\
    -y  &  -x
\end{bmatrix}
\begin{bmatrix}
    x  &  y      \\
    -y  &  x
\end{bmatrix}=(h-g)\cdot I_{2}
\)
\item $\eta=X\otimes X'\rightarrow \Delta_{h}$, $x\otimes x' \mapsto 0$ viz. $\eta$ is the zero map.
\item $\rho=X'\otimes X\rightarrow \Delta_{g}$, $x'\otimes x \mapsto 0$ viz. $\rho$ is the zero map.
\end{itemize}
So, $$\Gamma=[(\begin{bmatrix}
    x  &  -y      \\
    y  &  x
\end{bmatrix}
,
\begin{bmatrix}
    x  &  y      \\
    -y  &  x
\end{bmatrix}),(\begin{bmatrix}
    -x  &  y      \\
    -y  &  -x
\end{bmatrix}
,
\begin{bmatrix}
    x  &  y      \\
    -y  &  x
\end{bmatrix}),0,0 ]$$
is a Morita context between $(R,h)$ and $(S,g)$.
\end{example}
\begin{remark}

\begin{enumerate}
\item It is good to mention that in such a setting (remark \ref{suff cond for MC in LG-k}) not all \textit{Morita Contexts} between two objects are identical. In fact, they differ at the level of the matrix factorizations $X$ and $Y$.
\item Intuitively, \textit{Morita contexts} being pre-equivalences, it is not interesting to study cases where the two polynomials $f$ and $g$ are equal.
\end{enumerate}
\end{remark}

\section{FURTHER PROBLEMS}
 In this paper, we gave necessary conditions on $\eta$ and $\rho$ to obtain a \textit{Morita context} in $\mathcal{LG}_{K}$. But the only sufficient condition we were able to give on a $4$-tuple $(X,Y,\eta,\rho)$ to be a \textit{Morita context} was the trivial one, i.e., $\eta=\rho=0$. An interesting question would be to find nontrivial sufficient conditions on $\eta$ and $\rho$.
%
\begin{quote}
  \textbf{Acknowledgments}
\end{quote}
Part of this work was carried out during my Ph.D. studies in mathematics at the University of Ottawa in Canada.
I am grateful to Prof. Dr. Richard Blute who was my Ph.D. supervisor for all the fruitful interactions. \\ I gratefully acknowledge the financial support of the Queen Elizabeth Diamond Jubilee scholarship during my Ph.D. studies.



\bibliography{fomatati_ref}

\begin{thebibliography}{}

\bibitem[Amitsur, 1971]{amitsur1971rings}
Amitsur, S.~A. (1971).
\newblock Rings of quotients and morita contexts.
\newblock {\em Journal of Algebra}, 17(2):273--298.

\bibitem[Anderson and Fuller, 2012]{anderson2012rings}
Anderson, F.~W. and Fuller, K.~R. (2012).
\newblock {\em Rings and categories of modules}, volume~13.
\newblock Springer Science \& Business Media.

\bibitem[Bass, 1962]{bass1962morita}
Bass, H. (1962).
\newblock {\em The Morita theorems}.
\newblock University of Oregon.

\bibitem[Bass and Roy, 1967]{bass1967lectures}
Bass, H. and Roy, A. (1967).
\newblock {\em Lectures on topics in algebraic K-theory}, volume~41.
\newblock Tata Institute of Fundamental Research Bombay.

\bibitem[B{\"o}kstedt and Neeman, 1993]{bokstedt1993homotopy}
B{\"o}kstedt, M. and Neeman, A. (1993).
\newblock Homotopy limits in triangulated categories.
\newblock {\em Compositio Mathematica}, 86(2):209--234.

\bibitem[Camacho, 2015]{camacho2015matrix}
Camacho, A.~R. (2015).
\newblock Matrix factorizations and the landau-ginzburg/conformal field theory
  correspondence.
\newblock {\em arXiv preprint arXiv:1507.06494}.

\bibitem[Carqueville and Murfet, 2015]{carqueville2015toolkit}
Carqueville, N. and Murfet, D. (2015).
\newblock A toolkit for defect computations in landau-ginzburg models.
\newblock In {\em Proc. Symp. Pure Math}, volume~90, page 239.

\bibitem[Carqueville and Murfet, 2016]{carqueville2016adjunctions}
Carqueville, N. and Murfet, D. (2016).
\newblock Adjunctions and defects in landau--ginzburg models.
\newblock {\em Advances in Mathematics}, 289:480--566.

\bibitem[Conrad, 2016]{conrad2016tensor}
Conrad, K. (2016).
\newblock Tensor products.
\newblock {\em Notes of course, available on-line}.

\bibitem[Crisler and Diveris, 2016]{crisler2016matrix}
Crisler, D. and Diveris, K. (2016).
\newblock Matrix factorizations of sums of squares polynomials.
\newblock {\em Diakses pada: http://pages. stolaf.
  edu/diveris/files/2017/01/MFE1. pdf}.

\bibitem[Dummit and Foote, 2004]{dummit2004abstract}
Dummit, D.~S. and Foote, R.~M. (2004).
\newblock {\em Abstract algebra}, volume~3.
\newblock Wiley Hoboken.

\bibitem[Dyckerhoff et~al., 2013]{dyckerhoff2013pushing}
Dyckerhoff, T., Murfet, D., et~al. (2013).
\newblock Pushing forward matrix factorizations.
\newblock {\em Duke Mathematical Journal}, 162(7):1249--1311.

\bibitem[Eisenbud, 1980]{eisenbud1980homological}
Eisenbud, D. (1980).
\newblock Homological algebra on a complete intersection, with an application
  to group representations.
\newblock {\em Transactions of the American Mathematical Society},
  260(1):35--64.

\bibitem[Fomatati, 2019]{fomatati2019multiplicative}
Fomatati, Y.~B. (2019).
\newblock {\em Multiplicative Tensor Product of Matrix Factorizations and Some
  Applications}.
\newblock PhD thesis, Universit{\'e} d'Ottawa/University of Ottawa.

\bibitem[Fomatati, 2021]{fomatati2021tensor}
Fomatati, Y.~B. (2021).
\newblock On tensor products of matrix factorizations.
\newblock {\em arXiv preprint arXiv:2105.10811}.

\bibitem[Goldie, 1958]{goldie1958structure}
Goldie, A.~W. (1958).
\newblock The structure of prime rings under ascending chain conditions.
\newblock {\em Proceedings of the London Mathematical Society}, 3(4):589--608.

\bibitem[Goldie, 1960]{goldie1960semi}
Goldie, A.~W. (1960).
\newblock Semi-prime rings with maximum condition.
\newblock {\em Proceedings of the London Mathematical Society}, 3(1):201--220.

\bibitem[Jacobson, 1964]{jacobson1964structure}
Jacobson, N. (1964).
\newblock Structure of rings, rev. ed.
\newblock In {\em Amer. Math. Soc. Colloq. Publ}, volume~37.

\bibitem[Kaoutit, 2006]{kaoutit2006wide}
Kaoutit, L.~E. (2006).
\newblock Wide morita contexts in bicategories.
\newblock {\em arXiv preprint math/0608601}.

\bibitem[Keller et~al., 2011]{keller2011two}
Keller, B., Murfet, D., and Van~den Bergh, M. (2011).
\newblock On two examples by iyama and yoshino.
\newblock {\em Compositio Mathematica}, 147(2):591--612.

\bibitem[Khovanov and Rozansky, 2008]{khovanov2008matrix}
Khovanov, M. and Rozansky, L. (2008).
\newblock Matrix factorizations and link homology ii.
\newblock {\em Geometry \& Topology}, 12(3):1387--1425.

\bibitem[Lam, 1999]{lam1999graduate}
Lam, T.-Y. (1999).
\newblock Graduate texts in mathematics.

\bibitem[Morita, 1958]{morita1958duality}
Morita, K. (1958).
\newblock Duality for modules and its applications to the theory of rings with
  minimum condition.
\newblock {\em Science Reports of the Tokyo Kyoiku Daigaku, Section A},
  6(150):83--142.

\bibitem[Neeman, 2001]{neeman2001triangulated}
Neeman, A. (2001).
\newblock {\em Triangulated categories}.
\newblock Princeton University Press.

\bibitem[P{\'e}csi, 2012]{pecsi2012morita}
P{\'e}csi, B. (2012).
\newblock On morita contexts in bicategories.
\newblock {\em Applied Categorical Structures}, 20(4):415--432.

\bibitem[Puntanen and Styan, 2005]{puntanen2005historical}
Puntanen, S. and Styan, G.~P. (2005).
\newblock Historical introduction: Issai schur and the early development of the
  schur complement.
\newblock In {\em The Schur complement and its applications}, pages 1--16.
  Springer.

\bibitem[Silvester, 2000]{silvester2000determinants}
Silvester, J.~R. (2000).
\newblock Determinants of block matrices.
\newblock {\em The Mathematical Gazette}, 84(501):460--467.

\bibitem[Smith, 2011]{smith2011introduction}
Smith, R.~A. (2011).
\newblock Introduction to vector spaces, vector algebras, and vector
  geometries.
\newblock {\em arXiv preprint arXiv:1110.3350}.

\bibitem[Yoshino, 1998]{yoshino1998tensor}
Yoshino, Y. (1998).
\newblock Tensor products of matrix factorizations.
\newblock {\em Nagoya Mathematical Journal}, 152:39--56.

\end{thebibliography}
\addcontentsline{toc}{section}
{References}

\end{document}